\mathchardef\emptyset="001F
\theoremstyle{plain}
\newtheorem{theorem}{Theorem}[section]
\newtheorem{lemma}[theorem]{Lemma}
\newtheorem{proposition}[theorem]{Proposition}
\newtheorem{remark}[theorem]{Remark}
\theoremstyle{definition}
\theoremstyle{remark}
\numberwithin{equation}{section}
\newcommand{\e}{\varepsilon}
\newcommand{\R}{{\mathbb R}}
\newcommand{\tr}{{\rm tr}\,}
\newcommand{\C}{{\mathbb C}}
\newcommand{\Ao}{{\mathbb A}}
\newcommand{\T}{{\mathbb T}}
\newcommand{\psin}{\psi_{n}}
\newcommand{\bpsin}{\bar{\psi}_{n}}
\newcommand{\bphi}{\bar{\phi}}
\newcommand{\var}{\varphi}
\newcommand{\bvar}{\bar{\varphi}}
\newcommand{\no}{\noindent}
\newcommand{\non}{\nonumber}
\newcommand{\lan}{\lambda_{n}}
\renewcommand{\div}{\hbox{{\rm div}}}
\newcommand{\A}{A^{\e}}
\newcommand{\psie}{\psin^{\e}}
\newcommand{\bpsie}{\bpsin^{\e}}
\newcommand{\ce}{c^{\e}}
\newcommand{\lae}{\lan^{\e}}
\newcommand{\dsp}{\displaystyle}
\title[]
{Localization for the Schr\"{o}dinger equation in 
a locally periodic medium}
\author[Gr\'egoire Allaire]{Gr\'egoire Allaire$^{1}$}
\author[Mariapia Palombaro]{Mariapia Palombaro$^{2}$}
\begin{document}
\baselineskip3.15ex
\vskip .3truecm

\maketitle
{
\small
\noindent
$^1$ Centre de Math\'ematiques Appliqu\'ees, \'Ecole Polytechnique, 
91128 Palaiseau, France.\\
Email: gregoire.allaire@polytechnique.fr\\
\noindent
$^2$ Centre de Math\'ematiques Appliqu\'ees, \'Ecole Polytechnique, 
91128 Palaiseau, France.\\
Email: mariapia.palombaro@polytechnique.fr 
}
\begin{abstract}
\small{
We study the homogenization of a Schr\"{o}dinger equation in a 
locally periodic medium. For the time and space scaling of 
semi-classical analysis we consider well-prepared initial data 
that are concentrated near a stationary point (with respect 
to both space and phase) of the energy, i.e. the Bloch cell 
eigenvalue. We show that there exists a localized solution 
which is asymptotically given as the product of a Bloch
wave and of the solution of an homogenized Schr\"{o}dinger
equation with quadratic potential.

\vskip.3truecm
\noindent  {\bf Key words:}
 Homogenization, localization, Bloch waves, Schr\"{o}dinger.
\vskip.2truecm
\noindent  {\bf 2000 Mathematics Subject Classification:} 
35B27, 35J10.
}
\end{abstract}

\section{introduction}

\no
We study the homogenization of the following Schr\"{o}dinger equation

\begin{equation}\label{start}
\left\{
\begin{array}{ll}
\displaystyle 
\frac{i}{\e}\frac{\partial u_{\e}}{\partial t} - 
\div\left(A\left(x,\frac{x}{\e}\right)\nabla u_{\e}\right) 
+ \frac{1}{\e^{2}} c\left(x,\frac{x}{\e}\right) u_{\e}=0 & \mbox{ in } \R^{N}\times \R^+\\
[3mm]
u_{\e}(0,x)=u^{0}_{\e}(x) \hspace{5.1cm} & \mbox{ in } \R^{N}\\
\end{array}
\right.
\end{equation}

\no
where the unknown $u_{\e}(t,x)$ is a complex-valued function.  
The coefficients $A(x,y)$ and $c(x,y)$ are real and sufficiently smooth   
bounded functions defined for $x\in \R^{N}$ (the macroscopic variable) 
and $y\in \T^{N}$ (the microscopic variable in the unit torus).
The period $\e$ is a small positive parameter which is 
intended to go to zero. 
Furthermore the matrix $A$ is symmetric, uniformly positive definite. 
Of course the usual Schr\"{o}dinger equation is recovered when 
$A\equiv Id$ but, since there is no additional difficulty, we keep
the general form of equation (\ref{start}) in the sequel (which can 
be interpreted as introducing a non flat locally periodic metric). 
\par
The scaling of (\ref{start}) is that of semi-classical analysis
(see e.g. \cite{blp}, \cite{buslaev}, \cite{guillot},
\cite{gerard}, \cite{gerard2}, \cite{sjostrand}, \cite{guillot2},
\cite{pst}, \cite{pr}): if the period is rescaled to 1, it amounts
to look at large, time and space, variables of order $\e^{-1}$.
At least in the case when $A\equiv Id$ and $c(x,y)=c_0(x)+c_1(y)$,
there is a well-known theory for the asymptotic
limit of (\ref{start}) when $\e$ goes to zero. By using
WKB asymptotic expansion or the notion of semi-classical measures 
(or Wigner transforms) the homogenized problem is in some sense 
the Liouville transport equation for a classical particle which 
is the limit of the wave function $u_\e$. In other words,
for an initial data living in the $n$-th Bloch band and
under some technical assumptions on the Bloch spectral cell
problem (\ref{celleq}),
the semi-classical limit of (\ref{start}) is given by the
dynamic of the following Hamiltonian system in the phase space
$(x,\theta)\in\R^N\times\T^N$
\begin{equation}
\label{hamilton}
\left\{ \begin{array}{l}
\dot x = \nabla_\theta \lambda_n(x,\theta) \\
\dot \theta = - \nabla_x \lambda_n(x,\theta)
\end{array} \right.
\end{equation}
where the Hamiltonian $\lambda_n(x,\theta)$ is precisely
the $n$-th Bloch eigenvalue of (\ref{celleq}) (see \cite{buslaev},
\cite{guillot}, \cite{gerard}, \cite{gerard2}, \cite{sjostrand},
\cite{guillot2}, \cite{pst}, \cite{pr} for more details).
\par
Our approach to (\ref{start}) is different since we consider
special initial data that are monochromatic, have zero group
velocity and zero applied force. Namely the initial data is
concentrating at a point $(x^n,\theta^n)$ of the phase space
where $\nabla_\theta \lambda_n(x^n,\theta^n)=\nabla_x \lambda_n(x^n,\theta^n)=0$.
In such a case, the previous Hamiltonian system (\ref{hamilton})
degenerates (its solution is constant) and is unable to describe the precise
dynamic of the wave function $u_\e$. We exhibit another limit
problem which is again a Schr\"{o}dinger equation with quadratic 
potential. In other words we build a sequence of approximate
solutions of (\ref{start}) which are the product of a Bloch
wave and of the solution of an homogenized Schr\"{o}dinger
equation. Furthermore, if the full Hessian tensor of the
Bloch eigenvalue $\lambda_n(x,\theta)$ is positive definite
at $(x^n,\theta^n)$, we prove that all the eigenfunctions
of an homogenized Schr\"{o}dinger equation are exponentially
decreasing at infinity. In other words, we exhibit a localization
phenomenon for (\ref{start}) since we build a sequence of approximate
solutions that decay exponentially fast away from $x^n$. The
root of this localization phenomenon is the macroscopic modulation
(i.e. with respect to $x$) of the periodic coefficients which
is similar in spirit to the randomness that causes Anderson's 
localization (see \cite{cl} and references therein).
\par
Let us describe more precisely the type of well-prepared 
initial data that we consider. For a given point $(x^n,\theta^n)\in\R^N\times\T^N$
and a given function $v^{0}\in H^{1}(\R^{N})$ we take 
\begin{equation}\label{wp}
u_{\e}^{0}(x)=\psin\Big(x^{n},\frac{x}{\e},\theta^{n}\Big)
e^{2i\pi\frac{\theta^{n}\cdot x}{\e}}
v^{0}\Big(\frac{x-x^n}{\sqrt{\e}}\Big)
\end{equation}
where $\psin(x,y,\theta)$ is a so-called Bloch eigenfunction, 
solution of the following Bloch spectral cell equation

\begin{equation}\label{celleq}
- (\div_{y} + 2i\pi\theta)(A(x,y)(\nabla_{y} + 2i\pi\theta)\psin) + c(x,y) = 
\lan(x,\theta)\psin 
\hspace{1cm} \mbox{ in }\T^{N}\,,
\end{equation}

\no
corresponding to the $n$-th eigenvalue or energy level $\lan$. 
The Bloch wave $\psin$ is periodic with respect to $y$ but 
$v^0$ is not periodic, so $v^{0}\Big(\frac{x-x^n}{\sqrt{\e}}\Big)$
means that the initial data is concentrated around $x^n$ with 
a support of asymptotic size $\sqrt\e$. 
The Bloch frequency $\theta^n\in\T^{N}$, the localization point
$x^n\in\R^{N}$ and the energy level $n$ are chosen such that 
$\lan(x^n,\theta^n)$ is simple and $\nabla_x\lan(x^n,\theta^n) 
= \nabla_\theta\lan(x^n,\theta^n) =0$.  

Our main result (Theorem \ref{mainth}) shows that the solution 
of \eqref{start} is approximately given by
\begin{equation}\label{asymptot}
u_{\e}(t,x)\approx \psin\Big(x^{n},\frac{x}{\e},\theta^{n}\Big)
e^{i\frac{\lan(x^n,\theta^n) t}{\e}}e^{2i\pi\frac{\theta^{n}\cdot x}{\e}}
v\Big(t,\frac{x-x^n}{\sqrt{\e}}\Big)\,,
\end{equation}
where $v$ is the unique solution of the homogenized Schr\"odinger equation

\begin{equation}\label{hom0}
\left\{
\begin{array}{ll}
\displaystyle 
i \frac{\partial v}{\partial t} - \div\left(A^{*}\nabla v \right) 
+ \div(v B^{*} z) + c^{*} v 
+ v D^{*} z \cdot z  = 0 & \mbox{ in } \R^{N}\times \R^+\\
[3mm]
v (0,z)=v^{0}(z) \hspace{5.1cm} & \mbox{ in } \R^{N}\\
\end{array}
\right.
\end{equation}

\no
where $c^*$ is a constant coefficient and $A^*,B^*,D^*$ are constant 
matrices defined by
$$
A^{*}=\frac{1}{8\pi^{2}}\nabla_{\theta}\nabla_{\theta}\lan(x^n,\theta^n) \,, \
B^{*}=\frac{1}{2i\pi}\nabla_{\theta}\nabla_{x}\lan(x^n,\theta^n) \,, \
D^{*}=\frac{1}{2}\nabla_{x}\nabla_{x}\lan(x^n,\theta^n) \,.
$$
In Proposition \ref{propoself} we show that the homogenized
problem \eqref{hom0} is well-posed since the underlying
operator is self-adjoint. Furthermore, under the additional
assumption that the Hessian tensor $\nabla\nabla\lan(x^n,\theta^n)$
(with respect to both variables $x$ and $\theta$) is positive
definite, we prove that \eqref{hom0} admits a countable
number of eigenvalues and eigenfunctions which all decay
exponentially at infinity (see Proposition \ref{localization}).
In such a case, formula \eqref{asymptot} defines a family
of approximate (exponentially) localized solutions of
\eqref{start}.
\par
Let us indicate that the case of the first eigenvalue (ground state) 
$n=1$ with $\theta^1=0$ was already studied in \cite{alpiat1} (for 
the spectral problem rather than the evolution equation). 
The case of purely periodic coefficients (i.e. that depend only on 
$y$ and not on $x$) is completely different and was studied in 
\cite{alpiat2}. Indeed, in this latter case there is no localization 
effect and one proves that, for a longer time scale (of order $\e^{-1}$ 
with respect to \eqref{start}), the homogenized
limit is again a Schr\"{o}dinger equation without the drift and quadratic 
potential in \eqref{hom0}.

\section{Preliminaries}
\no
In the present section we give our main assumptions, set some notation 
and a few preliminary results needed in the proof of the main results in 
Section \ref{mainsec}. 

We first assume that the coefficients $A_{ij}(x,y)$ and
$c(x,y)$ are real, bounded, and Carath\'eodory functions 
(measurable with respect to $y$ and continuous in $x$), 
which are periodic with respect to $y$. In other words, 
they belong to $C_b\left(\R^N;L^\infty(\T^N)\right)$. 
Furthermore, the tensor $A(x,y)$ is symmetric uniformly coercive.
Under these assumptions, it is well-known that, for any values 
of the parameters ${\theta\in\T^{N}}$ and ${x\in\R^{N}}$, 
the cell problem \eqref{celleq} defines 
a compact self-adjoint operator on $L^{2}(\T^{N})$ which admits a 
countable sequence of real increasing eigenvalues  
${\displaystyle \{\lan(x,\theta)\}_{n\geq 1}}$ (repeated with their multiplicity) 
with corresponding eigenfunctions 
${\displaystyle \{\psin(x,\theta,y)\}_{n\geq 1}}$ 
normalized by 
$$
||\psin(x,\theta,\cdot)||_{L^{2}(\T^{N})} =1\,.
$$

Our main assumptions are:

\no
{\bf Hypothesis H1.} 
There exist $x^{n}\in\R^{N}$ and $\theta^{n}\in\T^{N}$ such that

\begin{equation}\label{assumpt}
\begin{cases}
&\hspace{-3mm}(i)\:  \lan(x^{n},\theta^{n})  \text{ is a simple eigenvalue,} \\
&\hspace{-3mm}(ii)\:  (x^{n},\theta^{n})  \text{ is a critical point of } \lan(x,\theta),
i.e. \: \nabla_{x}\lan(x^{n},\theta^{n})=\nabla_{\theta}\lan(x^{n},\theta^{n})=0.
\\
\end{cases}
\end{equation}

\vspace{2mm}

\no 
{\bf Hypothesis H2.}  
The coefficients $A(x,y)$ and $c(x,y)$ are of class $C^{2}$ 
with respect to the variable $x$ in a neighborhood of $x=x^{n}$.

\vspace{2mm}

\no 
Then we set:
\begin{equation*}
A_{1,h}(y):=\frac{\partial A}{\partial x_{h}}(x^{n},y)\,,\quad
A_{2,lh}(y):=\frac{\partial^{2} A}{\partial x_{l}\partial x_{h}}(x^{n},y)\,,\quad
\text{ for } \: l,h=1,\dots,N\,.
\end{equation*}
Similar notation is used to denote the derivatives of the function $c$ with 
respect to the $x$-variable. 
With an abuse of notation we further set 
$$
A(y):=A(x^{n},y)\,,\quad \lan:=\lan(x^{n},\theta^{n})\,,\quad 
\psin(y):=\psin(x^{n},y,\theta^{n})\,,
$$
and analogous notation holds for all derivatives of $\psin$ and $\lan$ with 
respect to the $x$-variable and the $\theta$-variable evaluated at 
$x=x^{n}$ and $\theta=\theta^{n}$.
Without loss of generality we will assume in the sequel that $x^{n}=0$.\\
{\bf Notation.} For any function $\rho(y)$ defined on $\T^{N}$ we set  
$$
\rho^{\e}(z):=\rho(z/\sqrt{\e})
$$ 
where $z:=\sqrt{\e}y\equiv x/\sqrt\e$.  
In the sequel the symbols $\div_y$ and $\nabla_y$ will stand for the divergence 
and gradient operators which act  
with respect to the $y$-variable while div and $\nabla$ will indicate the divergence and 
gradient operators which act with respect to the $z$-variable.
Finally throughout this paper the Einstein summation convention is used.


\medskip

Under assumption \eqref{assumpt}-(i) it is a classical matter to prove that 
the $n$-th eigencouple of \eqref{celleq} is smooth with respect to the variable 
$\theta$ in a 
neighborhood of $\theta=\theta^n$ (see \cite{kato}) and has the same differentiability 
property as the coefficients with respect to the variable $x$. 
Introducing the unbounded operator $\Ao_n(x,\theta)$ defined on $L^2(\T^N)$ by
\begin{equation*}
\Ao_n(x,\theta)\psi = 
- (\div_y +2i\pi\theta) \Big( A(x,y) (\nabla_y +2i\pi\theta)\psi \Big) + 
c(x,y) \psi - \lambda_n(x,\theta) \psi ,
\end{equation*}
it is easy to differentiate \eqref{celleq}.
Denoting by $(e_k)_{1\leq k\leq N}$ the canonical basis
of $\R^N$, the first derivatives satisfy
\begin{equation}
\label{deriv1t}
\begin{array}{ll}
\dsp \Ao_n(x,\theta)\frac{\partial\psin}{\partial\theta_k} = 
&\dsp 2i\pi e_k  A(x,y)(\nabla_y +2i\pi\theta)\psin  \\[0.3cm] 
&\dsp + (\div_y +2i\pi\theta) \left( A(x,y) 2i\pi e_k \psin \right) +
\frac{\partial\lambda_n}{\partial\theta_k}(x,\theta) \psin \,,
\end{array}
\end{equation}

\begin{equation}
\label{deriv1x}
\begin{array}{ll}
\dsp \Ao_n(x,\theta)\frac{\partial\psin}{\partial x_l} =
&\dsp  (\div_y +2i\pi\theta) \Big( \frac{\partial A}{\partial x_l}(x,\theta)
       (\nabla_y +2i\pi\theta)\psin \Big) \\[0.3cm] 
&\dsp  -\frac{\partial c}{\partial x_l}(x,y)\psin
       + \frac{\partial \lambda_n}{\partial x_l}(x,\theta)\psin \,.
\end{array}
\end{equation}

\no
Similar formulas hold for second order derivatives. By integrating 
the cell equations for the second order derivatives against $\psin$ 
we obtain the following formulas that will be useful in the sequel
(their proofs are safely left to the reader).

\begin{lemma}
Assume that assumptions {\bf H1} and {\bf H2} hold true.
Then the following equalities hold: 

\begin{align}\label{eq4}
        &  \int_{\T^{N}} \frac{1}{2\pi i}\Big[ 
            A_{1,h}(\nabla_{y}+2i\pi\theta^{n})\frac{\partial \psin}{\partial \theta_{k}} 
            \cdot (\nabla_{y} -2i\pi\theta^{n})\bpsin + 
            c_{1,h}\,\frac{\partial \psin}{\partial \theta_{k}} \bpsin\Big] \: dy \\
\non &   +\int_{\T^{N}}\Big[
             A_{1,h} e_{k}\psin \cdot (\nabla_{y}-2i\pi\theta^{n})\bpsin
            + A e_{k}\frac{\partial \psin}{\partial x_{h}}\cdot 
         (\nabla_{y}-2i\pi\theta^{n})\bpsin \Big]\: dy \\
\non &   -\int_{\T^{N}}\Big[
            e_{k}\bpsin A_{1,h}\cdot (\nabla_{y}+2i\pi\theta^{n})\psin 
           + e_{k}\bpsin A \cdot 
          (\nabla_{y}+2i\pi\theta^{n})\frac{\partial \psin}{\partial x_{h}}\Big]\: dy \\ 
\non &   - \frac{1}{2i\pi}\frac{\partial^{2} \lan}{\partial x_{h}\partial \theta_{k}} = 0\,,
\end{align}

\begin{align}\label{eq5}
       &  \int_{\T^{N}} \Big[ 
          A_{2,lh}(\nabla_{y}+2i\pi\theta^{n})\psin\cdot (\nabla_{y} -2i\pi\theta^{n})\bpsin
           + \Big(c_{2,lh} - \frac{\partial^{2} \lan}{\partial x_{l}\partial x_{h}}\Big)
          |\psin|^{2} \Big] \: dy  \\
\non &  + \int_{\T^{N}} \Big[ 
           A_{1,h}(\nabla_{y}+2i\pi\theta^{n})\frac{\partial \psin}{\partial x_{l}}
           \cdot (\nabla_{y} -2i\pi\theta^{n})\bpsin +
           c_{1,h}\,\frac{\partial \psin}{\partial x_{l}}\bpsin \Big]\: dy \\ 
\non &  + \int_{\T^{N}} \Big[
           A_{1,l}(\nabla_{y}+2i\pi\theta^{n})\frac{\partial \psin}{\partial x_{h}}\cdot 
           (\nabla_{y} -2i\pi\theta^{n})\bpsin  
           + c_{1,l}\,\frac{\partial \psin}{\partial x_{h}}\bpsin  \Big] 
           \: dy = 0\,,
\end{align}

\begin{align}\label{eq6}
        &  \int_{\T^{N}} \Big[
           2i\pi e_k  A(y) 
          (\nabla_y +2i\pi\theta^{n})\frac{\partial\psi_n}{\partial\theta_l}\bpsin 
          - \left( A(y) 2i\pi e_k 
          \frac{\partial\psi_n}{\partial\theta_l} \right)(\nabla_y -2i\pi\theta^{n})\bpsin 
          \Big]\: dy \\
\non & + \int_{\T^{N}} \Big[
         2i\pi e_l  A(y) 
         (\nabla_y +2i\pi\theta^{n})\frac{\partial\psi_n}{\partial\theta_k}\bpsin
        -\left( A(y) 2i\pi e_l \frac{\partial\psi_n}{\partial\theta_k} \right)
        (\nabla_y -2i\pi\theta^{n})\bpsin \Big]\: dy \\
\non & -\int_{\T^{N}} \Big[
         4\pi^2e_k A(y) e_l |\psi_n|^{2} +4\pi^2e_l A(y) e_k |\psi_n|^{2} \Big]\: dy \\
\non &  + \frac{\partial^2\lambda_n}{\partial\theta_l\partial\theta_k}(\theta^{n}) =0\,.
\end{align}
\end{lemma}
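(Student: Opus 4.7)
The strategy is the standard one for this kind of Hellmann--Feynman--type computation: differentiate the Bloch cell equation \eqref{celleq} twice in the appropriate pair of variables, test the resulting identity against $\bpsin$ in $L^2(\T^N)$, and exploit the self-adjointness of $\Ao_n(x^n,\theta^n)$ together with the critical-point hypothesis \textbf{H1}(ii).

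The key preliminary observation is that, for any smooth periodic $\phi$, one has
\begin{equation*}
\int_{\T^N} \bigl(\Ao_n(x^n,\theta^n)\phi\bigr)\,\bpsin\, dy
= \int_{\T^N} \phi\,\overline{\Ao_n(x^n,\theta^n)\psin}\, dy = 0,
\end{equation*}
because $\Ao_n$ is self-adjoint on $L^2(\T^N,\C)$ (the coercive, symmetric real matrix $A$ and the conjugation $\theta^n\mapsto -\theta^n$ compensate, $\theta^n$ being real) and $\Ao_n\psin=0$. Consequently, whenever we differentiate the cell equation and collect the term $\Ao_n\bigl(\text{second derivative of }\psin\bigr)$, it will drop out when paired with $\bpsin$; only commutator terms and derivatives of $\lan$ will remain.

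To obtain \eqref{eq4}, I would differentiate \eqref{deriv1t} with respect to $x_h$ (equivalently, \eqref{deriv1x} with respect to $\theta_k$), multiply by $\bpsin$, integrate over $\T^N$, and integrate by parts on every factor of $(\div_y+2i\pi\theta^n)$ to pair it with $(\nabla_y-2i\pi\theta^n)\bpsin$ on the opposite side. The product-rule cross terms involving $\partial\lan/\partial x_h$ or $\partial\lan/\partial\theta_k$ disappear by \textbf{H1}(ii); the cross Hessian $\partial^2\lan/\partial x_h\partial\theta_k$ survives because $\|\psin\|_{L^2(\T^N)}=1$; dividing the result by $2i\pi$ rearranges the remaining terms into exactly \eqref{eq4}. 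For \eqref{eq5} I would differentiate \eqref{deriv1x} once more with respect to $x_l$ and repeat the procedure, the symmetric appearance of $h$ and $l$ coming from the two ways the product rule can act on $A$ and on $\psin$. For \eqref{eq6} I would differentiate \eqref{deriv1t} with respect to $\theta_l$; the novelty here is that the two occurrences of $(\nabla_y+2i\pi\theta)$ in $\Ao_n$ each produce a factor $2i\pi e_l$, so the product rule generates the extra algebraic contribution $-4\pi^2(e_k A e_l + e_l A e_k)|\psin|^2$ that appears as the third line of \eqref{eq6}.

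The only genuine obstacle is careful bookkeeping: tracking the signs produced by the integration by parts (the conjugate of $2i\pi\theta^n$ is $-2i\pi\theta^n$, so $(\nabla_y+2i\pi\theta^n)$ pairs with $(\nabla_y-2i\pi\theta^n)\bpsin$), verifying that all the cross terms $\frac{\partial \lan}{\partial\cdot}\frac{\partial\psin}{\partial\cdot}$ produced by Leibniz vanish at $(x^n,\theta^n)$, and symmetrizing correctly. No deeper idea is needed beyond the Fredholm alternative used through self-adjointness; this is precisely the routine computation that the authors are comfortable leaving to the reader.
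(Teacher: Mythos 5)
Your proposal is correct and follows exactly the route the paper itself indicates: the text states that the identities are obtained ``by integrating the cell equations for the second order derivatives against $\psin$.'' Your recipe of differentiating \eqref{deriv1t} and \eqref{deriv1x} once more, pairing against $\bpsin$, using the self-adjointness of $\Ao_n$ together with $\Ao_n\psin=0$ to annihilate the second-derivative-of-$\psin$ terms, and invoking \textbf{H1}(ii) to discard the $\nabla\lan(x^n,\theta^n)$ contributions, is precisely that computation spelled out.
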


We now give the variational formulations of the above cell problems,
rescaled at size $\e$.

\begin{lemma}
Assume that assumptions {\bf H1} and {\bf H2} hold true and let 
$\var(z)$ be a smooth compactly supported function defined from $\R^{N}$ 
into $\C$. 
Then the following equalities hold: 

\begin{align}\label{eq1}
       &  \int_{\R^{N}} \Big[ \A(\nabla_{y}+2i\pi\theta^{n})\psie\cdot
          (\sqrt{\e}\nabla-2i\pi\theta^{n})\bvar(z) +(\ce - \lae)\psie \bvar\Big] \: dz = 0 \,,
\end{align}

\begin{align}\label{eq2}
       &  \int_{\R^{N}}\Big[ \A(\nabla_{y}+2i\pi\theta^{n})
           \frac{\partial \psie}{\partial \theta^{n}_{k}}
           \cdot (\sqrt{\e}\nabla -2i\pi\theta^{n})\bvar + (\ce- \lae)
           \frac{\partial \psie}{\partial \theta^{n}_{k}} \bvar\Big] \: dz \\
\non &   +\int_{\R^{N}}\Big[ -2\pi i e_{k} \cdot \A(\nabla_{y}+2i\pi\theta^{n})
            \psie\bvar + \A \, 2\pi i e_{k} \psie \cdot 
          (\sqrt{\e}\nabla -2i\pi\theta^{n})\bvar\Big]  \: dz=0\,, 
\end{align}

\begin{align}\label{eq3}
       &  \int_{\R^{N}}\Big[ \A(\nabla_{y}+2i\pi\theta^{n})
           \frac{\partial \psie}{\partial x_{h}}\cdot(\sqrt{\e}\nabla -2i\pi\theta^{n}) \bvar
            +(\ce - \lae) \frac{\partial \psie}{\partial x_{h}}\bvar \Big] \: dz  \\
\non &   +\int_{\R^{N}}\Big[
            \A_{1,h}(\nabla_{y}+2i\pi\theta^{n})\psie \cdot
         (\sqrt{\e}\nabla -2i\pi\theta^{n})\bvar +
            \ce_{1,h}\,\psie \bvar\Big] \: dz =0\,.
\end{align}

\end{lemma}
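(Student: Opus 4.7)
The plan is to derive \eqref{eq1}, \eqref{eq2}, \eqref{eq3} as the rescaled weak formulations of \eqref{celleq}, \eqref{deriv1t}, \eqref{deriv1x} respectively, all evaluated at the critical point $(x,\theta)=(x^n,\theta^n)$ and tested against $\bar\varphi(z)$ after the change of variable $y = z/\sqrt\e$.

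First I would view each of these strong equations as a pointwise identity in $y\in\R^N$ (using the periodic extension of $\psi_n$ and of the coefficients). Evaluating at $y = z/\sqrt\e$ replaces $A(y)$, $c(y)$, $\psi_n(y)$ and their $x$- and $\theta$-derivatives by the $\e$-superscripted functions, while $\nabla_y$ applied to $\psi_n$ becomes $(\nabla_y\psi_n)^\e$ in the notation of the paper. Multiplying by $\bar\varphi(z)$, integrating over $\R^N$ and integrating by parts on each $\div_y$ term, the chain rule produces a factor $\sqrt\e$:
$$
\int_{\R^N} (\div_y V)(z/\sqrt\e)\,\bar\varphi(z)\, dz
\;=\; -\sqrt\e\int_{\R^N} V^\e(z)\cdot \nabla \bar\varphi(z)\, dz,
$$
with no boundary contribution since $\varphi$ is compactly supported. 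Combining this with the zeroth-order piece coming from $(\div_y+2i\pi\theta^n)$, the factor acting on the test function in every resulting integral is precisely $(\sqrt\e\nabla - 2i\pi\theta^n)\bar\varphi$.

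Applied to \eqref{celleq} at $(x^n,\theta^n)$ with $V = A(y)(\nabla_y+2i\pi\theta^n)\psi_n$, this procedure yields \eqref{eq1} directly. Applied to \eqref{deriv1t} (resp.\ \eqref{deriv1x}) at $(x^n,\theta^n)$, the same rescaling is performed on each source term on the right-hand side. The crucial point is that the source terms $\frac{\partial\lambda_n}{\partial\theta_k}\psi_n$ and $\frac{\partial\lambda_n}{\partial x_h}\psi_n$ would spoil the stated form of \eqref{eq2}--\eqref{eq3}, but they vanish by Hypothesis H1$(ii)$. What remains are the terms carrying $2i\pi e_k A(\nabla_y+2i\pi\theta^n)\psi_n$ and $(\div_y+2i\pi\theta^n)(A\, 2i\pi e_k\psi_n)$ for \eqref{eq2}, and $(\div_y+2i\pi\theta^n)(A_{1,h}(\nabla_y+2i\pi\theta^n)\psi_n)$ together with $-c_{1,h}\psi_n$ for \eqref{eq3}; the same integration-by-parts recipe turns them into the second integrals in \eqref{eq2} and \eqref{eq3}.

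The main obstacle is purely one of bookkeeping: one must keep track of the convention that $\nabla_y$ differentiates $\psi_n$ \emph{before} the rescaling, so that $\nabla_y\psi_n^\e$ stands for $(\nabla_y\psi_n)^\e$ and not $\sqrt\e^{-1}\nabla\psi_n^\e$, and one must carefully pair the $\sqrt\e$ produced by the chain rule with the $-2i\pi\theta^n$ coming from the adjoint of $2i\pi\theta^n$ to produce the operator $(\sqrt\e\nabla - 2i\pi\theta^n)$ on the test function side. Once this convention is fixed, the argument is mechanical and requires only the smoothness and compact support of $\varphi$ and the critical-point condition H1$(ii)$.
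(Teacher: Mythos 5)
Your argument is correct and takes the same route as the paper's one-line proof, merely filling in the details: each of \eqref{eq1}--\eqref{eq3} is the cell equation \eqref{celleq} or its $\theta$- and $x$-derivatives \eqref{deriv1t}--\eqref{deriv1x}, periodically extended, evaluated at $y=z/\sqrt\e$, tested against $\bar\varphi$, and integrated by parts — with the chain-rule factor $\sqrt\e$ producing $(\sqrt\e\nabla-2i\pi\theta^n)\bar\varphi$ on the test-function side. You are also right that the critical-point condition H1(ii) is what kills the $\frac{\partial\lambda_n}{\partial\theta_k}\psi_n$ and $\frac{\partial\lambda_n}{\partial x_h}\psi_n$ source terms, without which \eqref{eq2} and \eqref{eq3} would carry extra terms.
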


\begin{proof}
Formula \eqref{eq1} follows straightforwardly from equation \eqref{celleq} while 
\eqref{eq2}-\eqref{eq3} are consequences of \eqref{deriv1t}-\eqref{deriv1x}.
\end{proof}

Finally we recall the notion of two-scale convergence 
introduced in \cite{allaire}, \cite{nguetseng} (that will be used 
with $\delta=\sqrt\e$).

\begin{proposition}
\label{prop2s}
Let $f_{\delta}$ be a sequence uniformly bounded in $L^2(\R^N)$.
\begin{enumerate}
\item There exists a subsequence, still denoted by $f_\delta$, and a
limit $f_0(x,y) \in L^2(\R^N\times\T^N)$ such that
$f_\delta$ {\em two-scale converges} weakly to $f_0$ in the
sense that
$$
\lim_{\delta\to 0}
\int_{\R^N} f_\delta(x)\phi(x,x/\delta)\,dx =
\int_{\R^N}\int_{\T^N} f_0(x,y)\phi(x,y)\,dx\,dy
$$
for all functions $\phi(x,y)\in L^2\left( \R^N ; C(\T^N) \right)$. 

\item Assume further that $f_\delta$ two-scale converges 
weakly to $f_0$ and that
$$
\lim_{\delta\to 0} \| f_{\delta} \|_{L^2(\R^N)} = 
\| f_0 \|_{L^2\left(\R^N\times\T^N\right)} .
$$
Then $f_\delta$ is said to two-scale converge {\em strongly} to its
limit $f_0$ in the sense that, if $f_0$ is smooth enough, e.g. 
$f_0\in L^2\left( \R^N ; C(\T^N) \right)$, we have
$$
\lim_{\delta\to 0} \int_{\R^N} \left| f_\delta(x)-
f_0\right(x,x/\delta\left) \right|^2 dx = 0.
$$

\item Assume that $\delta \nabla f_\delta$ is also 
uniformly bounded in $L^2(\R^N)^N$. Then
there exists a subsequence, still denoted by $f_\delta$, and a limit
$f_0(x,y) \in L^2(\R^N ;H^1(\T^N))$ such that
$f_\delta$ two-scale converges to $f_0(x,y)$ and $\delta\nabla
f_\delta$ two-scale converges to $\nabla_y f_0(x,y)$.
\end{enumerate}
\end{proposition}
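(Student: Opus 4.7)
This is the classical two-scale compactness theorem of Nguetseng--Allaire, so my plan is to assemble the standard ingredients. For part~(1), I would argue by duality. The key preliminary observation is that for any admissible $\phi\in L^2(\R^N;C(\T^N))$ one has the Riemann--Lebesgue type identity $\|\phi(\cdot,\cdot/\delta)\|_{L^2(\R^N)}\to\|\phi\|_{L^2(\R^N\times\T^N)}$ as $\delta\to 0$; I would prove this first by a density reduction to $\phi(x,y)=\chi_B(x)\varphi(y)$ with $\varphi\in C(\T^N)$, where convergence of periodic Riemann sums applies directly. Then the linear functional
$$
L_\delta(\phi):=\int_{\R^N} f_\delta(x)\phi(x,x/\delta)\,dx
$$
satisfies $|L_\delta(\phi)|\le \|f_\delta\|_{L^2}\|\phi(\cdot,\cdot/\delta)\|_{L^2}$, which is uniformly bounded in $\delta$ by a constant times $\|\phi\|_{L^2(\R^N\times\T^N)}$. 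Since $L^2(\R^N;C(\T^N))$ is separable, a diagonal extraction yields a subsequence along which $L_\delta(\phi)\to L(\phi)$ for every admissible $\phi$, with $|L(\phi)|\le C\|\phi\|_{L^2(\R^N\times\T^N)}$. Because admissible functions are dense in $L^2(\R^N\times\T^N)$, $L$ extends to a continuous functional on the full $L^2$ space, and Riesz representation produces the two-scale limit $f_0$.

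For part~(2), I would just expand the square:
$$
\int_{\R^N}\bigl|f_\delta(x)-f_0(x,x/\delta)\bigr|^2\,dx=\|f_\delta\|_{L^2}^2-2\,\mathrm{Re}\!\int f_\delta\,\overline{f_0(x,x/\delta)}\,dx+\int|f_0(x,x/\delta)|^2\,dx.
$$
Under the hypothesis of norm convergence, the first term tends to $\|f_0\|^2_{L^2(\R^N\times\T^N)}$; the middle term tends to $-2\|f_0\|^2$ by applying the weak two-scale convergence of $f_\delta$ to the admissible test function $\phi=f_0$; and the last term tends to $\|f_0\|^2$ by the Riemann--Lebesgue property recorded above. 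The three contributions cancel exactly.

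For part~(3), I would apply part~(1) twice along a common subsequence, obtaining two-scale limits $f_0$ for $f_\delta$ and $g_0=(g_{0,j})$ for $\delta\,\partial_j f_\delta$. The identification $g_0=\nabla_y f_0$ comes from the integration-by-parts identity
$$
\int \delta\nabla f_\delta\cdot\phi(x,x/\delta)\,dx=-\int f_\delta\bigl[\delta(\mathrm{div}_x\phi)(x,x/\delta)+(\mathrm{div}_y\phi)(x,x/\delta)\bigr]\,dx
$$
valid for $\phi\in C^\infty_c(\R^N;C^\infty(\T^N))^N$. Passing to the two-scale limit on both sides the $\delta\,\mathrm{div}_x\phi$ term drops out, yielding $\int\!\!\int g_0\cdot\phi=-\int\!\!\int f_0\,\mathrm{div}_y\phi$, which means $f_0(x,\cdot)\in H^1(\T^N)$ with $\nabla_y f_0=g_0$ for a.e.\ $x$.

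The main technical point I expect to need care with is in part~(1): one must choose the class of admissible test functions carefully so that both the uniform bound on $L_\delta$ and the density in $L^2(\R^N\times\T^N)$ hold simultaneously. The choice $L^2(\R^N;C(\T^N))$ is the standard compromise, giving separability for Banach--Alaoglu while remaining dense after completion. Everything else is routine, and parts~(2)--(3) follow mechanically once part~(1) is in hand.
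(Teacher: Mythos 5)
The paper does not prove Proposition~\ref{prop2s}: it recalls it as a known result and cites \cite{allaire} and \cite{nguetseng}, so there is no in-paper argument to compare against. Your proposal is the standard Nguetseng--Allaire compactness proof and is correct as outlined: part~(1) via the Riemann--Lebesgue identity for admissible oscillating test functions, a uniform bound on the linear functionals $L_\delta$, separability of $L^2(\R^N;C(\T^N))$ plus a diagonal extraction, density of admissible functions in $L^2(\R^N\times\T^N)$, and Riesz representation; part~(2) by expanding the square; part~(3) by a second application of part~(1) together with the integration-by-parts identity and letting the $\delta\,\mathrm{div}_x$ term vanish. The one place to be careful, which you implicitly get right, is that the density reduction used to establish the Riemann--Lebesgue identity must be carried out in the $L^2(\R^N;C(\T^N))$-norm (not the weaker $L^2(\R^N\times\T^N)$-norm), since it is only in the former norm that the map $\phi\mapsto\|\phi(\cdot,\cdot/\delta)\|_{L^2(\R^N)}$ is uniformly continuous in $\delta$; this is exactly why the class $L^2(\R^N;C(\T^N))$ is the right compromise, as you note.
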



\section{Main results}\label{mainsec}
\no
We begin by recalling the usual a priori estimates for the solution 
of the Schr\"{o}dinger equation \eqref{start} which hold true since 
the coefficients are real. They are obtained by multiplying the 
equation successively by $\overline u_\e$ and 
$\frac{\partial \overline u_\e}{\partial t}$, and integrating by parts. 

\begin{lemma}\label{apriori}
There exists $C>0$ independent of $\e$ such that the solution of \eqref{start} satisfies
\begin{align*}
& ||u_{\e}||_{L^{\infty}(\R^+;L^{2}(\R^{N}))} = ||u_{\e}^{0}||_{L^{2}(\R^{N})}\,,\\
&  \e||\nabla u_{\e}||_{L^{\infty}(\R^+;L^{2}(\R^{N}))} \leq
C\Big( ||u_{\e}^{0}||_{L^{2}(\R^{N})}+\e ||\nabla u_{\e}^{0}||_{L^{2}(\R^{N})}\Big)\,.
\end{align*}
\end{lemma}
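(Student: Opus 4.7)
The plan is to derive both estimates from the conservation of the $L^2$ mass and the conservation of energy, which are standard consequences of the Schr\"odinger structure and the fact that the coefficients $A$ and $c$ are real and $A$ is symmetric. The authors' hint already points to the two multipliers to use.

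For the first estimate, I would multiply \eqref{start} by $\overline{u_\e}$, integrate over $\R^N$, and take the imaginary part. Since $A$ is real and symmetric, the term $\int_{\R^N} A\nabla u_\e \cdot\nabla\overline{u_\e}\,dx$ obtained after integration by parts is real, and similarly $\frac{1}{\e^2}\int c|u_\e|^2\,dx$ is real. Therefore only the time-derivative term contributes to the imaginary part, yielding
$$
\frac{1}{\e}\,\mathrm{Re}\int_{\R^N}\partial_t u_\e\,\overline{u_\e}\,dx = \frac{1}{2\e}\frac{d}{dt}\int_{\R^N}|u_\e|^2\,dx = 0,
$$
which gives the exact conservation $\|u_\e(t,\cdot)\|_{L^2}=\|u_\e^0\|_{L^2}$.

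For the second estimate, I would multiply \eqref{start} by $\partial_t\overline{u_\e}$, integrate, and take the real part. The time-derivative term becomes $\frac{i}{\e}\int|\partial_t u_\e|^2\,dx$, which is purely imaginary and drops out. Using that $A$ is real symmetric, one has $\mathrm{Re}(A\nabla u_\e\cdot \nabla\partial_t\overline{u_\e})=\frac12\partial_t(A\nabla u_\e\cdot\nabla\overline{u_\e})$, and analogously for the $c$-term. After integration by parts this proves conservation of the energy
$$
E(t):=\int_{\R^N}A\nabla u_\e\cdot\nabla\overline{u_\e}\,dx+\frac{1}{\e^2}\int_{\R^N}c\,|u_\e|^2\,dx = E(0).
$$
Then I would use the uniform coercivity of $A$ to bound $\int|\nabla u_\e|^2$ from above by $E(t)$ plus $\e^{-2}\|c\|_\infty\|u_\e\|_{L^2}^2$, use boundedness of $A$ and $c$ to bound $E(0)$ by $\|\nabla u_\e^0\|_{L^2}^2+\e^{-2}\|u_\e^0\|_{L^2}^2$, and finally apply the first estimate to control $\|u_\e\|_{L^2}$ by $\|u_\e^0\|_{L^2}$. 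Multiplying through by $\e^2$ and taking a square root produces exactly the stated inequality with a constant depending only on the coercivity bound, $\|A\|_\infty$ and $\|c\|_\infty$.

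There is no real obstacle here beyond bookkeeping; the only subtlety is remembering that the contribution of the potential term $\e^{-2}\int c|u_\e|^2$ to the energy need not be nonnegative (since $c$ may change sign), which is why the first estimate must be invoked to absorb it back into the right-hand side. A small technical point is that to justify the formal computations one should first work with a smooth approximation of $u_\e$ (either by regularizing the data or by using the unitary group generated by the self-adjoint Hamiltonian) and pass to the limit; this is standard for Schr\"odinger equations with smooth bounded real coefficients and uniformly coercive $A$, so I would just mention it and refer to classical semigroup theory rather than carry it out.
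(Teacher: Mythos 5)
Your proof is correct and follows exactly the approach indicated in the paper: multiply by $\overline{u_\e}$ and take the imaginary part for $L^2$ conservation, multiply by $\partial_t\overline{u_\e}$ and take the real part for energy conservation, then combine using coercivity and the first estimate to absorb the (possibly sign-changing) potential term. The remark about needing regularization/semigroup theory to justify the formal computations is also appropriate.
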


\begin{theorem}\label{mainth}
Assume that assumptions {\bf H1} and {\bf H2} hold true and that the initial data $u_{\e}^{0}$ 
is of the form \eqref{wp}. Then the solution of \eqref{start} can be written as 
\begin{equation}\label{form}
u_{\e}(t,x)=e^{i\frac{\lan t}{\e}}e^{2i\pi\frac{\theta^{n}\cdot x}{\e}}
v_{\e}\Big(t,\frac{x-x^n}{\sqrt{\e}}\Big)\,,
\end{equation}
where $v_{\e}(t,z)$ two-scale converges strongly to $\psin(y)v(t,z)$, i.e. 
\begin{equation}
\label{eq2s.6c}
\lim_{\e\to0} \int_{\R^N} \left| v_\e(t,z) -
\psin\left(\frac{z}{\sqrt\e}\right) v(t,z) \right|^2 dz = 0 ,
\end{equation}
uniformly on compact time intervals in $\R^+$, and 
$v$ is the unique solution of the homogenized Schr\"odinger equation

\begin{equation}\label{hom}
\left\{
\begin{array}{ll}
\displaystyle 
i \frac{\partial v}{\partial t} - \div\left(A^{*}\nabla v \right) 
+ \div(v B^{*} z) + c^{*} v 
+ v D^{*} z \cdot z  = 0 & \mbox{ in } \R^{N}\times \R^+\\
[3mm]
v (0,z)=v^{0}(z) \hspace{5.1cm} & \mbox{ in } \R^{N}\\
\end{array}
\right.
\end{equation}

\no
where 
$$
A^{*}=\frac{1}{8\pi^{2}}\nabla_{\theta}\nabla_{\theta}\lan(x^n,\theta^n) \,, \
B^{*}=\frac{1}{2i\pi}\nabla_{\theta}\nabla_{x}\lan(x^n,\theta^n) \,, \
D^{*}=\frac{1}{2}\nabla_{x}\nabla_{x}\lan(x^n,\theta^n) \,,
$$ 
and $c^{*}$ is given by
\begin{equation*}
c^{*}=\int_{\T^{N}}\hspace{-1mm}\Big[
          A(\nabla_{y}+2i\pi\theta^{n})\psin\cdot \frac{\partial \bpsin}{\partial x_{k}} e_{k}
         - A (\nabla_{y}-2i\pi\theta^{n})\frac{\partial \bpsin}{\partial x_{k}}\cdot \psin\,e_{k}
         - A_{1,k}(\nabla_{y}-2i\pi\theta^{n})\bpsin\cdot \psin e_{k} \Big]  dy \,. 
\end{equation*}

\end{theorem}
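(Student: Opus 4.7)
The plan is to substitute the factored form~\eqref{form} into~\eqref{start}, derive a rescaled evolution problem for $v_\e(t,z)$, and then pass to the two-scale limit using oscillating test functions built from $\psin$ and its first derivatives in $\theta$ and $x$. A direct computation shows that $v_\e$ solves
\begin{equation*}
i\partial_{t} v_\e - \frac{1}{\e}(\sqrt\e\,\div + 2i\pi\theta^{n}\cdot)
\bigl[\A(\sqrt\e\nabla + 2i\pi\theta^{n})v_\e\bigr] +
\frac{\ce - \lan}{\e}v_\e = 0 \quad\text{in } \R^{+}\times\R^{N},
\end{equation*}
with initial datum $v_\e(0,z)=\psie(z)v^{0}(z)$. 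The exponential phases $e^{i\lan t/\e}$ and $e^{2i\pi\theta^{n}\cdot x/\e}$ precisely absorb the $\e^{-2}$ singularity of~\eqref{start} and the constant $2i\pi\theta^{n}/\e$ piece of the gradient; only the cell operator divided by $\e$ remains as singular, and at leading order in the fast variable $y=z/\sqrt\e$ this operator annihilates $\psin(y)$ by~\eqref{celleq}.

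Lemma~\ref{apriori} translates into uniform bounds on $v_\e$ and $\sqrt\e\,\nabla v_\e$ in $L^{\infty}(\R^{+};L^{2}(\R^{N}))$, so Proposition~\ref{prop2s}(3) provides a subsequence along which $v_\e$ two-scale converges to some $v_{0}(t,z,y)\in L^{\infty}_{t}L^{2}(\R^{N};H^{1}(\T^{N}))$ with $\sqrt\e\,\nabla v_\e$ two-scale converging to $\nabla_{y}v_{0}$. Testing the equation for $v_\e$ against $\e\bvar(t,z,z/\sqrt\e)$ and passing to the limit, the $\e^{-1}$ contributions force $v_{0}(t,z,\cdot)$ to lie in the kernel of the cell operator at $(x^{n},\theta^{n})$; by simplicity (hypothesis~\textbf{H1}-(i)) this kernel is spanned by $\psin$, hence $v_{0}(t,z,y)=\psin(y)v(t,z)$ for some $v\in L^{\infty}_{t}L^{2}(\R^{N})$.

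To identify the equation satisfied by $v$ I would take the oscillating test function
\begin{equation*}
\Phi_\e(t,z) = \bpsie(z)\phi(t,z) + \sqrt\e\,\Bigl[\tfrac{1}{2i\pi}\overline{(\partial\psin/\partial\theta_{k})}(z/\sqrt\e)\,\partial_{k}\phi + z_{h}\,\overline{(\partial\psin/\partial x_{h})}(z/\sqrt\e)\,\phi\Bigr] + \e\,\Phi_{2,\e}(t,z),
\end{equation*}
with $\phi$ smooth, compactly supported, and $\Phi_{2,\e}$ chosen so as to absorb the remaining resonant contributions, combined with the Taylor expansions $A(\sqrt\e z,y)=A(y)+\sqrt\e z_{h}A_{1,h}(y)+\tfrac{\e}{2}z_{h}z_{l}A_{2,lh}(y)+o(\e)$ and analogously for $c$. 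Inserting $\Phi_\e$ into the weak form of the equation for $v_\e$, the cell identities~\eqref{eq1}--\eqref{eq3} make all $\e^{-1}$ and $\e^{-1/2}$ terms cancel; at order $\e^{0}$ the integrand proportional to $\partial_{k}\phi\,\partial_{l}\bar v$ reproduces the left-hand side of~\eqref{eq6} and therefore collapses to $(A^{*})_{lk}\partial_{k}\phi\,\partial_{l}\bar v$; the integrand proportional to $z_{h}\phi\,\partial_{l}\bar v$ is the one of~\eqref{eq4} and produces $(B^{*})_{lh}z_{h}\phi\,\partial_{l}\bar v$; the integrand proportional to $z_{h}z_{l}\phi\bar v$ is the one of~\eqref{eq5} and produces $(D^{*})_{lh}z_{h}z_{l}\phi\bar v$; and the non-Hessian residuals assemble into the coefficient $c^{*}$ stated in the theorem, yielding exactly the weak form of~\eqref{hom}.

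The main obstacle is this last bookkeeping step: choosing $\Phi_{2,\e}$ so that every $\e^{-1}$ and $\e^{-1/2}$ residual cancels via~\eqref{eq1}--\eqref{eq3}, and then identifying the surviving $O(1)$ integrands with the Hessian components of $\lan(x,\theta)$ through~\eqref{eq4}--\eqref{eq6}; hypothesis~\textbf{H2} is what validates the Taylor expansions of the coefficients that this requires. Once~\eqref{hom} is identified, its well-posedness (Proposition~\ref{propoself}) gives uniqueness of $v$, so the whole sequence converges without further extraction. Finally the strong two-scale convergence~\eqref{eq2s.6c} follows from mass conservation: both~\eqref{start} and~\eqref{hom} preserve the $L^{2}$ norm, so $\|v_\e(t)\|_{L^{2}(\R^{N})}=\|\psie v^{0}\|_{L^{2}(\R^{N})}\to\|\psin v^{0}\|_{L^{2}(\R^{N}\times\T^{N})}=\|v^{0}\|_{L^{2}(\R^{N})}=\|v(t)\|_{L^{2}(\R^{N})}=\|\psin v(t)\|_{L^{2}(\R^{N}\times\T^{N})}$, so Proposition~\ref{prop2s}(2) applies uniformly on compact time intervals.
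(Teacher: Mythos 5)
Your proposal follows the paper's proof essentially line by line: rescale to $v_\e(t,z)$, obtain uniform bounds from Lemma~\ref{apriori}, extract a two-scale limit via Proposition~\ref{prop2s}, identify $v^*=\psin v$ by testing at order $\e$ and invoking simplicity, then insert the corrected test function containing $\partial\psin/\partial\theta_k$ and $z_h\,\partial\psin/\partial x_h$, Taylor-expand the coefficients, and let the cell identities \eqref{eq1}--\eqref{eq6} produce $A^*$, $B^*$, $D^*$, $c^*$; uniqueness of \eqref{hom} then upgrades subsequential convergence, and mass conservation on both sides gives the strong two-scale statement. One small remark: the paper's test function $\Psi_\e$ stops at order $\sqrt{\e}$ and needs no further corrector; your additional $\e\Phi_{2,\e}$ term is superfluous (any such $O(\e)$ piece contributes nothing in the two-scale limit), and your phrasing that it is ``chosen so as to absorb the remaining resonant contributions'' slightly misattributes the cancellation mechanism — the identities \eqref{eq1}--\eqref{eq3}, applied with test functions of the form $v_\e\bvar$, $v_\e\partial_k\bvar$, $v_\e\bvar z_k$, already eliminate every $\e^{-1}$ and $\e^{-1/2}$ term without any higher-order corrector.
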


\begin{remark}
{\rm
Notice that even if the tensor $A^*$ might be  
non-coercive, the homogenized problem \eqref{hom} is well posed. 
Indeed the operator 
$\Ao^{*}:L^2(\R^N)\to L^2(\R^N)$ defined by 
\begin{equation}\label{herm}
\Ao^* \var = - \div\left(A^{*}\nabla \var \right) 
+ \div(\var B^{*} z) + c^{*} \var
+ \var D^{*} z \cdot z 
\end{equation}
is self-adjoint (see Proposition \ref{propoself}) and therefore 
by using semi-group theory (see {\em e.g.} \cite{brezis} or 
Chapter X in \cite{reedsimon}), one can show that there exists a unique solution 
in $C(\R^+;L^2(\R^N))$, although it may not belong to
$L^2(\R^+;H^1(\R^N))$.
}
\end{remark}

The next result establishes the conservation of the $L^2$-norm for the 
solution $v$ of the homogenized equation \eqref{hom} and the self-adjointness
of the operator $\Ao^*$. 

\begin{proposition}\label{propoself}
Let $v\in C(\R^+;L^2(\R^N))$ be solution to \eqref{hom}. Then
\begin{equation}\label{cons}
||v(t,\cdot)||_{L^2(\R^N)}=||v^0||_{L^2(\R^N)} \quad \forall \,t\in\R^+\,.
\end{equation} 
Moreover the operator $\Ao^{*}$ defined in \eqref{herm} is self-adjoint. 
\end{proposition}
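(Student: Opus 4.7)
The plan is: (i) show $\Ao^*$ is symmetric on $C_c^\infty(\R^N)$; (ii) promote symmetry to self-adjointness on the natural maximal domain by invoking the standard essential self-adjointness of Schr\"odinger-type operators with smooth coefficients of at-most-polynomial growth (see e.g.\ Reed--Simon, Vol.\ II, Chap.\ X); (iii) deduce the $L^2$-norm conservation \eqref{cons} from symmetry via the standard computation $\frac{d}{dt}\|v\|_{L^2}^2 = -2\,\text{Im}\langle\Ao^* v,v\rangle = 0$, valid for any strong solution.

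The core of the proof is step (i). I would compute the symmetry defect $\langle\Ao^*\var,\psi\rangle - \langle\var,\Ao^*\psi\rangle$ for $\var,\psi\in C_c^\infty(\R^N)$ by integrating by parts. The contributions from $-\div(A^*\nabla\cdot)$ and from multiplication by the real polynomial $D^* z\cdot z$ vanish because $A^*$ and $D^*$ are real symmetric. The structural observation that drives the argument is that $B^* = \frac{1}{2i\pi}\nabla_\theta\nabla_x\lan$ is purely imaginary, so $\overline{B^*}=-B^*$. A single integration by parts on the drift $\div(\var B^* z)$, together with its adjoint counterpart, then yields only a zeroth-order residue: the two first-order contributions combine into $-\int B^*_{jk}z_k\,\partial_j(\var\bar\psi)\,dz$, which after another integration by parts equals $\tr(B^*)\int\var\bar\psi\,dz$. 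The multiplication by $c^*$ contributes $(c^*-\overline{c^*})\int\var\bar\psi\,dz = 2i\,\text{Im}(c^*)\int\var\bar\psi\,dz$. Hence symmetry reduces to the scalar identity
\begin{equation*}
\text{Im}(c^*)\;=\;\frac{1}{4\pi}\sum_{k=1}^{N}\frac{\partial^{2}\lan}{\partial\theta_k\,\partial x_k}.
\end{equation*}

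Verifying this identity is the main obstacle, and it is precisely the reason for the particular form of $c^*$ chosen in Theorem \ref{mainth}. I would proceed as follows: setting $h=k$ and summing over $k$ in \eqref{eq4} expresses $\tr(\nabla_\theta\nabla_x\lan)$ as an explicit combination of torus-integrals involving $A$, $A_{1,k}$, $c_{1,k}$ and the derivatives $\partial_{\theta_k}\psi_n$, $\partial_{x_k}\psi_n$. On the other hand, $c^*-\overline{c^*}$ unfolds into differences of the three integrals defining $c^*$ with their complex conjugates; after integrating $y$-derivatives by parts on $\T^N$ and using \eqref{celleq} together with its $x$-derivative \eqref{deriv1x} (the natural source of the $c_{1,k}$ terms), the result is shown to match the summed version of \eqref{eq4} term by term. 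The difficulty is combinatorial rather than conceptual: one must track signs carefully as $(\nabla_y+2i\pi\theta^n)$ and $(\nabla_y-2i\pi\theta^n)$ exchange roles under integration by parts against the symmetric real matrices $A$ and $A_{1,k}$.
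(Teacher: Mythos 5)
Your argument is essentially the paper's. You reduce symmetry of $\Ao^*$, via integration by parts and the observations $\overline{A^*}=A^*$, $\overline{D^*}=D^*$, $\overline{B^*}=-B^*$, to the scalar identity ${\rm Im}(c^*)=\frac{1}{4\pi}\tr(\nabla_\theta\nabla_x\lan)$ --- this is precisely \eqref{contaccio} --- and you propose to verify it using \eqref{eq4}, which is the same lemma the paper leans on. The paper's presentation is merely reordered: it proves the conservation law \eqref{cons} first, by multiplying \eqref{hom} by $\bar v$ and taking imaginary parts (the same integration by parts you carry out), extracting \eqref{contaccio} as the residue, and then verifies \eqref{contaccio} by re-expressing $c^*$ and $\partial^2_{x_h\theta_k}\lan$ through the inner-product forms \eqref{formc}--\eqref{formB} (built from \eqref{deriv1t} and \eqref{eq4}) rather than by your direct term-by-term matching; the symmetry of $\Ao^*$ is then recorded as an immediate consequence. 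The one step you should reconsider is (ii): the Reed--Simon essential self-adjointness theorems for Schr\"odinger operators tacitly presume a coercive, $-\Delta$-like principal part, whereas the remark preceding this proposition explicitly allows $A^*$ to be non-coercive or degenerate. The paper instead closes the argument by observing that $\Ao^*$ is symmetric and, after subtraction of a multiple of the identity, (maximal) monotone, hence self-adjoint by Br\'ezis' theory; you should either restrict to $A^*>0$ or switch to that monotone-operator route to match the generality claimed.
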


\begin{proof}
We multiply the equation \eqref{hom} by $\bar{v}$ and take the imaginary part to 
obtain
\begin{equation}\label{impart}
\frac{1}{2}\frac{d}{dt}\int_{\R^N}|v|^2 \, dz =
{\rm Im}\left(\int_{\R^N} v B^*z\cdot \nabla\bar{v}-c^* |v|^2 \, dz \right)\,. 
\end{equation}
After integrating by parts one finds that the right hand side of \eqref{impart} equals
$$
-\Big(\frac{1}{2i}\tr B^* + {\rm Im}c^*\Big)\int_{\R^N}|v|^2 \, dz
$$
and therefore \eqref{cons} is proved as soon as we show that 
\begin{equation}\label{contaccio}
\frac{1}{2i}\tr B^* + {\rm Im}c^*=0\,.
\end{equation}
In order to do this we first rewrite the coefficients $c^*$ and $B^*$ in a 
suitable form. 
Denoting by $\langle\cdot\,,\cdot\rangle$ the Hermitian inner product in $L^2(\T^N)$ 
and using equation \eqref{deriv1t} we write

\begin{equation}\label{formc}
c^*=  \frac{1}{2i\pi}\langle 
\Ao_n\frac{\partial \psin}{\partial \theta_k},\frac{\partial \psin}{\partial x_k}\rangle
-\int_{\T^N}A_{1,k}(\nabla_y-2i\pi\theta^n)\bpsin\cdot\psin e_k\, dy\,,
\end{equation}
\no
while by equations \eqref{deriv1t}-\eqref{eq4} it follows that

\begin{align}\label{formB}
  \frac{1}{2i\pi}\frac{\partial^2 \lan}{\partial x_h\partial\theta_k}=
     & -\frac{1}{2i\pi}\langle\overline{
       \Ao_n\frac{\partial \psin}{\partial \theta_k},\frac{\partial \psin}{\partial x_h}}\rangle
       -\frac{1}{2i\pi}\langle\overline{
       \Ao_n\frac{\partial \psin}{\partial x_h},\frac{\partial \psin}{\partial\theta_k}}\rangle \\
\non & +2i{\rm Im}\int_{\T^N}A_{1,h}(\nabla_y-2i\pi\theta^n)\bpsin\cdot\psin e_k\, dy \,.
\end{align}
\no
By formulae \eqref{formc}-\eqref{formB} it is readily seen that equality
\eqref{contaccio} holds true.

In order to prove the self-adjointness of the operator $\Ao^*$, one first checks 
that $\Ao^*$ is symmetric, which easily follows by \eqref{contaccio} and 
the fact that ${\displaystyle \overline{B}^*=-B^*}$, and then observes that 
up to addition of a multiple of the identity the operator $\Ao^*$ is monotone 
(see {\em e.g.} \cite{brezis2}, Chapter VII).
\end{proof}

In the next proposition we will denote by $\nabla\nabla \lan$ the Hessian matrix of the function 
$\lan(x,\theta)$ evaluated at the point $(x^n,\theta^n)$, namely
$$
\nabla\nabla \lan =
\left(
\begin{array}{ll}
\nabla_{x}\nabla_{x}\lan      & \nabla_{\theta}\nabla_{x}\lan \\
\nabla_{\theta}\nabla_{x}\lan & \nabla_{\theta}\nabla_{\theta}\lan 
\end{array}
\right)
(x^n,\theta^n)\,.
$$

\begin{proposition}\label{localization}
Assume that the matrix $\nabla\nabla \lan$ is positive 
definite. Then there exists an orthonormal basis ${\displaystyle \{\var_n\}_{n\geq 1}}$ 
of eigenfunctions  
of $\Ao^*$; moreover for each $n$ there exists a real constant $\gamma_n>0$ such that
\begin{equation}\label{decay}
e^{\gamma_n |z|}\var_n\,,\,e^{\gamma_n |z|}\nabla\var_n  \in L^2(\R^N)\,.
\end{equation}
\end{proposition}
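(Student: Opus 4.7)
The plan is to rewrite $\Ao^{*}$ as a magnetic Schr\"odinger operator with a real, \emph{confining} quadratic potential, and then apply standard spectral theory. Since $\overline{B^{*}}=-B^{*}$, I would set $B^{*}=-ib$ with $b:=\nabla_{\theta}\nabla_{x}\lan/(2\pi)$ a real matrix. Combining the identity $\div(vB^{*}z)=B^{*}z\cdot\nabla v+v\,\tr B^{*}$ with $\tfrac{1}{2i}\tr B^{*}+\mathrm{Im}\,c^{*}=0$ (Proposition \ref{propoself}), a completion of squares with $\alpha:=\tfrac{1}{2}(A^{*})^{-1}b$ produces
\begin{equation*}
\Ao^{*}\var=\Pi^{T}A^{*}\Pi\var+V(z)\var,\qquad \Pi:=-i\nabla+\alpha z,\qquad V(z):=\mathrm{Re}\,c^{*}+\bigl(D^{*}-\tfrac{1}{4}b^{T}(A^{*})^{-1}b\bigr)z\cdot z.
\end{equation*}
A Schur complement computation shows $2\bigl(D^{*}-\tfrac{1}{4}b^{T}(A^{*})^{-1}b\bigr)=\nabla_{x}\nabla_{x}\lan-(\nabla_{\theta}\nabla_{x}\lan)^{T}(\nabla_{\theta}\nabla_{\theta}\lan)^{-1}\nabla_{\theta}\nabla_{x}\lan$, so the hypothesis $\nabla\nabla\lan>0$ forces both $A^{*}$ and the quadratic part of $V$ to be positive definite; in particular $V(z)\to+\infty$ as $|z|\to\infty$.

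The associated quadratic form $q[\var]=\int_{\R^{N}}\bigl[A^{*}\Pi\var\cdot\overline{\Pi\var}+V|\var|^{2}\bigr]\,dz$ is then equivalent, up to an additive constant, to the weighted Sobolev norm $\|\nabla\var\|_{L^{2}}^{2}+\||z|\var\|_{L^{2}}^{2}+\|\var\|_{L^{2}}^{2}$. Its form domain embeds compactly in $L^{2}(\R^{N})$ (by Rellich inside a large ball combined with a tail estimate coming from $|z|\var\in L^{2}$), so $\Ao^{*}$ has compact resolvent and, by the spectral theorem, admits an orthonormal basis $\{\var_{n}\}_{n\ge 1}$ of eigenfunctions with real eigenvalues $\mu_{n}\to+\infty$.

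For the exponential decay I would run a weighted-energy (Agmon-type) estimate. Fix an eigenpair $(\mu_{n},\var_{n})$ and $\gamma>0$; for $R>0$ let $F_{R}(z)=\min(\gamma|z|,R)$ and $f:=e^{F_{R}}$, so $|\nabla f|\le\gamma f$. Expanding $\Pi(f\var_{n})=f\Pi\var_{n}-i(\nabla f)\var_{n}$ and $\Pi(f^{2}\var_{n})=f^{2}\Pi\var_{n}-2if(\nabla f)\var_{n}$, the magnetic cross terms produced in $q[f\var_{n}]$ and in $q(\var_{n},f^{2}\var_{n})$ cancel upon using the eigenvalue identity $q(\var_{n},f^{2}\var_{n})=\mu_{n}\int f^{2}|\var_{n}|^{2}\,dz$, leaving the clean relation
\begin{equation*}
q[f\var_{n}]=\mu_{n}\int_{\R^{N}}f^{2}|\var_{n}|^{2}\,dz+\int_{\R^{N}}A^{*}\nabla f\cdot\nabla f\,|\var_{n}|^{2}\,dz.
\end{equation*}
Discarding the non-negative kinetic part of $q[f\var_{n}]$ and using $A^{*}\nabla f\cdot\nabla f\le\|A^{*}\|\gamma^{2}f^{2}$ yields $\int(V-\mu_{n}-\|A^{*}\|\gamma^{2})f^{2}|\var_{n}|^{2}\,dz\le 0$. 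Since $V(z)\to+\infty$, the integrand is bounded below by a positive constant outside a large ball, so $\int f^{2}|\var_{n}|^{2}\,dz$ is controlled independently of $R$; Fatou then gives $e^{\gamma|z|}\var_{n}\in L^{2}(\R^{N})$ for \emph{any} $\gamma>0$. Revisiting the kinetic integral with a slightly smaller exponent $\gamma_{n}$ to absorb the polynomial weight $|\alpha z|$ then produces the corresponding bound on $e^{\gamma_{n}|z|}\nabla\var_{n}$.

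I expect the main technical step to be the cancellation identity for $q[f\var_{n}]$: one has to verify carefully that the cross terms $\pm 2if\nabla f\cdot A^{*}\Pi\var_{n}\bar\var_{n}$ coming from $\Pi(f\var_{n})$ and from $\Pi(f^{2}\var_{n})$ cancel against each other once the eigenvalue equation is invoked, leaving only the diamagnetic remainder $|\nabla f|^{2}$. This is the classical diamagnetic manipulation and presents no conceptual difficulty, only careful book-keeping; the remaining ingredients---Schur complement positivity, compactness of the form domain, and the spectral theorem---are routine.
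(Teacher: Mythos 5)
Your proof is correct, and for the decay estimate it takes a genuinely different route from the paper. The algebraic reformulation of $\Ao^{*}$ as a magnetic Schr\"odinger operator $\Pi\cdot A^{*}\Pi + V$ with $\Pi=-i\nabla+\alpha z$, $\alpha=\tfrac12(A^{*})^{-1}b$, is accurate: the first-order coefficient $-2iA^{*}\alpha z=-ibz=B^{*}z$ matches, the zero-order piece $-\tfrac{i}{2}(\tr b)$ matches $\tr B^{*}+i\,\mathrm{Im}\,c^{*}$ once \eqref{contaccio} is invoked, and the Schur-complement identity $2\bigl(D^{*}-\tfrac14 b^{T}(A^{*})^{-1}b\bigr)=\nabla_{x}\nabla_{x}\lan-(\nabla_{\theta}\nabla_{x}\lan)^{T}(\nabla_{\theta}\nabla_{\theta}\lan)^{-1}\nabla_{\theta}\nabla_{x}\lan$ checks out, so that $\nabla\nabla\lan>0$ is exactly equivalent to $A^{*}>0$ together with confinement of $V$. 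This is the same coercivity the paper exploits, but the paper uses it more directly by writing the form as $\tfrac{1}{8\pi^{2}}\int\nabla\nabla\lan\,\Phi\cdot\overline\Phi$ with the block vector $\Phi=(2i\pi z\var,\nabla\var)$; your Schur-complement/gauge decomposition is a clean repackaging of the same fact and makes the self-adjointness and confinement more visible. For the decay, however, the two proofs genuinely diverge: the paper runs an annulus iteration with cutoffs $\rho_{k}(z)=\rho(|z|-k)$, deriving the recursion $\|\var_{n}\|_{H^{1}(\R^{N}\setminus B_{k+1})}^{2}\le\bigl(\tfrac{c_{1}}{1+c_{1}}\bigr)^{k}\|\var_{n}\|_{H^{1}(\R^{N}\setminus B_{0})}^{2}$ and thus one specific rate $\gamma_{0}$, whereas you use an Agmon weighted-energy identity $q[f\var_{n}]=\mu_{n}\int f^{2}|\var_{n}|^{2}+\int A^{*}\nabla f\cdot\nabla f\,|\var_{n}|^{2}$ (which I verified: the magnetic cross terms $-2f\,\mathrm{Im}\bigl(A^{*}\nabla f\cdot\Pi\var_{n}\,\bar\var_{n}\bigr)$ appear identically in $q[f\var_{n}]$ and in $\mathrm{Re}\,q(\var_{n},f^{2}\var_{n})$, hence cancel on subtraction) together with the quadratic growth of $V$. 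Your method actually proves the stronger statement that $e^{\gamma|z|}\var_{n}\in L^{2}$ for \emph{every} $\gamma>0$, whereas the paper only claims exponential decay at some rate; since the potential is confining the Agmon argument would even yield sub-Gaussian decay if pushed. Both routes are standard; yours is arguably more structural and yields a quantitatively sharper conclusion at no extra cost.
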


\begin{proof}
Up to shifting the spectrum of the operator $\Ao^{*}$, we may assume that Re$(c^*)=0$.
In order to prove the existence of an orthonormal basis of eigenfunctions 
we introduce the inverse operator of $\Ao^{*}$, denoted by $G^*$  
\begin{align}
\non G^{*}: L^2(\R^N) & \to L^2(\R^N) \\
\non               f  & \to \var \text{ unique solution in } H^1(\R^N) \text{ of} \\
\label{inverse}       &\quad\quad \Ao^* \var = f \quad\text{ in }\R^N
\end{align}
and we show that $G^*$ is compact. Indeed  multiplication of \eqref{inverse} by 
$\bar{\var}$ yields
\begin{equation}\label{compact}
\int_{\R^N}[
A^*\nabla\var\cdot\nabla\bar{\var}-iB^*{\rm Im}(\var z\cdot\nabla\bar{\var})+
D^*z\cdot z|\var|^2 ] \, dz=
\int_{\R^N}f\bar{\var}\, dz\,.
\end{equation}
Upon defining the $2N$-dimensional vector-valued function $\Phi$
$$
\Phi:=\left(\hspace{-2mm}
\begin{array}{c}
2i\pi z\var \\
\nabla\var
\end{array}\hspace{-2mm}
\right)
$$
we rewrite \eqref{compact} in agreement with this block notation
\begin{equation*}
\int_{\R^N}\frac{1}{8\pi^2} \nabla\nabla\lan \Phi\cdot\overline\Phi \, dz=
\int_{\R^N}f\bar{\var}\, dz \,. 
\end{equation*}
By the positivity assumption on the matrix $\nabla\nabla\lan$ it follows that there 
exists a positive constant $c_0$ such that
\begin{equation*}
c_0\Big(||\nabla\var||_{L^2(\R^N)}^2+||z\var||_{L^2(\R^N)}^2\Big)
\leq
||f||_{L^2(\R^N)}||\var||_{L^2(\R^N)}\,,
\end{equation*}
which implies by a standard argument
$$
||\var||_{L^2(\R^N)}^2 + ||\nabla\var||_{L^2(\R^N)}^2+||z\var||_{L^2(\R^N)}^2 \leq
C ||f||_{L^2(\R^N)}^2,
$$
from which we deduce the compactness of $G^*$ in $L^2(\R^N)$-strong.
Thus there exists an infinite countable number of eigenvalues for $\Ao^{*}$.

We are left to prove the exponential decay of the eigenfunctions (this
fact is quite standard, see {\em e.g.} \cite{alam}). Let $\var_n$ be an
eigenfunction and let $\sigma_n$ be the associated eigenvalue
\begin{equation}\label{eigenfn}
\Ao^* \var_n = \sigma_n \var_n\,.
\end{equation}  
Let $R_0>0$ and $\rho\in C^\infty(\R)$ be a real function such that
$0\leq\rho\leq 1$, $\rho(s)=0$ for $s\leq R_0$ and $\rho(s)=1$ for $s\geq R_0+1$ and 
for every positive integer $k$ define $\rho_k\in C^\infty(\R^N)$ in the following way
$$
\rho_k(z):=\rho(|z|-k).
$$
We now multiply \eqref{eigenfn} by $\bar{\var}_n\rho_k^2$ to get
\begin{equation*}
\int_{\R^N}\rho_k^2\left(
A^*\nabla\var_n\cdot\nabla\bar{\var}_n-iB^*{\rm Im}(\var_n z\cdot\nabla\bar{\var}_n)+
D^*z\cdot z|\var_n|^2 -\sigma_n|\var_n|^2 \right) dz=
\end{equation*}
\begin{equation}\label{trucco}
\int_{\R^N}\left(\rho_k|\var_n|^2B^*z\cdot\nabla\rho_k -
2\rho_k \,\bar{\var}_n A^*\nabla\var_n\cdot\nabla\rho_k \right) dz\,.
\end{equation}
Next remark that since the left hand side of \eqref{trucco} is real the right hand side 
must be also real and therefore it is equal to
\begin{equation}\label{real}
\int_{\R^N}- 2\rho_k \,{\rm Re}(\bar{\var}_n A^*\nabla\var_n)\cdot\nabla\rho_k  \, dz\,.
\end{equation} 
Let $B_k$ denote the ball of radius $R_0+k$ and center $z=0$ and observe 
that the support of $\nabla\rho_k$ is contained in $B_{k+1}\setminus B_k$.
Then putting up together \eqref{trucco} and \eqref{real} and using again the positive 
definiteness of the matrix $\nabla\nabla\lan$ we obtain for $R_0$ sufficiently large 
($\sqrt{R_0}>\sigma_n$ does the job)
\begin{equation*}
||\var_n||_{H^1(\R^N\setminus B_{k+1})}^2
\leq 
c_1\Big(||\var_n||^2_{H^1(\R^N\setminus B_k)}-||\var_n||^2_{H^1(\R^N\setminus B_{k+1})}\Big)
\end{equation*}
where $c_1$ is a positive constant independent of $k$.
Thus we deduce that 
\begin{equation}\label{estim}
||\var_n||_{H^1(\R^N\setminus B_{k+1})}^2
\leq 
\Big(\frac{c_1}{1+c_1}\Big)^k||\var_n||^2_{H^1(\R^N\setminus B_0)}\,.
\end{equation}
Upon defining a positive constant $\gamma_0>0$ by 
$$
\Big(\frac{c_1}{1+c_1}\Big)^k=e^{-2\gamma_0(k+R_0)}
$$
it is finally seen that \eqref{estim} implies the estimate \eqref{decay} for any 
exponent $0<\gamma_n<\gamma_0$.
\end{proof}

\no
{\bf Proof of Theorem \ref{mainth}.}
We rescale the space variable by introducing
$$
z=\frac{x}{\sqrt{\e}} \,,
$$
and define the sequence $v_{\e}$ by 
\begin{equation}\label{vdef}
v_{\e}(t,z):=e^{-i\frac{\lan t}{\e}}
e^{-2i\pi\frac{\theta^{n}\cdot x}{\e}}u_{\e}(t,x)\,.
\end{equation}
By the a priori estimates of Lemma \ref{apriori} 
it follows that $v_{\e}(t,z)$ satisfies
$$
|| v_\e ||_{L^\infty\left(\R^+;L^2(\R^N)\right)} + 
\sqrt{\e} || \nabla v_\e ||_ {L^\infty\left(\R^+;L^2(\R^N)\right)}
\leq C ,
$$
and applying the compactness of two-scale convergence 
(see Proposition \ref{prop2s}), up to a subsequence, there exists a limit 
$v^*(t,z,y)\in L^2\left(\R^+\times\R^N;H^1(\T^N)\right)$
such that $v_\e$ and $\sqrt{\e}\nabla v_\e$ two-scale 
converge to $v^*$ and $\nabla_y v^*$, respectively. 
Similarly, by definition of the initial data, 
$v_\e(0,z)$ two-scale converges to 
$\psi_n (y) v^0(z)$. 

\no
Although $v_\e$ is the unknown which will pass to the limit 
in the sequel, it is simpler to write an equation for another  
function, namely
\begin{equation}\label{wdef}
w_{\e}(t,z):=e^{2i\pi\frac{\theta^{n}\cdot z}{\sqrt{\e}}}v_{\e}(t,z)
= e^{-i\frac{\lan t}{\e}} u_{\e}(t,x)\,.
\end{equation}
By \eqref{wdef} it follows that 
\begin{equation}\label{formula}
\nabla w_{\e}=e^{2i\pi\frac{\theta^{n}\cdot z}{\sqrt{\e}}}
\Big(\nabla +2i\pi\frac{\theta^{n}}{\sqrt{\e}}\Big)v_{\e}\,,
\end{equation}
and it can be checked that the new unknown $w_{\e}$ solves the following equation 
\begin{equation}\label{neweq}
\left\{
\begin{array}{ll}
\displaystyle 
i \frac{\partial w_{\e}}{\partial t} - 
\div [A\left(\sqrt{\e}z,z/\sqrt{\e}\right)\nabla w_{\e}]
+ \frac{1}{\e} [ c(\sqrt{\e}z, z/\sqrt{\e}) - \lan ] w_{\e}=0  & \mbox{ in } \R^{N}\times \R^+\\
[3mm]
w_{\e}(0,z)=u^{0}_{\e}(\sqrt{\e}z) \hspace{5.1cm} & \mbox{ in } \R^{N}\\
\end{array}
\right.
\end{equation}
\no
where the differential operators div and $\nabla$ act with respect to 
the new variable $z$. 

\no 
{\bf First step.} 
We multiply the equation \eqref{neweq} by the complex conjugate of 
$$
\e\phi\Big(t,z,\frac{z}{\sqrt{\e}}\Big) e^{2i\pi\frac{\theta^n\cdot z}{\sqrt{\e}}} 
$$ 
where $\phi(s,z,y)$ is a smooth test function defined on 
$\R^+\times\R^N\times\T^N$, with compact support in $\R^+\times\R^N$.  
Since this test function has compact support (fixed with respect to $\e$), the effect of 
the non-periodic variable in the coefficients is negligible for sufficiently small $\e$. 
Therefore we can replace the value 
of each coefficient at  $(\sqrt{\e}z, z/\sqrt{\e})$ by its Taylor expansion of  order two 
about the point $(0, z/\sqrt{\e})$.
Integrating by parts and using \eqref{wdef} and \eqref{formula} yields
\begin{align*}
& -i \e \int_{0}^{+\infty}\hspace{-3mm} \int_{\R^{N}} 
   v_{\e} \frac{\partial \bphi^{\e}}{\partial t}\: dt\, dz 
   -  i \e \int_{\R^{N}} 
   v_{\e}(0,z)\bphi \Big(0,z,\frac{z}{\sqrt{\e}}\Big) \, dz  \\
& + \int_{0}^{+\infty}\hspace{-3mm}
   \int_{\R^{N}}
   [ \A + \A_{1,h} \, \sqrt{\e} z_{h} + \textstyle{\frac{1}{2}} \A_{2,lh} \, \e z_{l}  z_{h} 
   + o(\e)]
   (\sqrt{\e}\nabla +2i\pi\theta^{n})v_{\e}\hspace{-1mm}  \cdot
   (\sqrt{\e}\nabla-2i\pi\theta^{n})\bphi^{\e}  \,dz \, dt \\
& + \int_{0}^{+\infty}\hspace{-3mm} \int_{\R^{N}}
  [\ce + \ce_{1,h} \sqrt{\e} z_{h} + \textstyle{\frac{1}{2}} \ce_{2,lh}\,\e z_{l}  z_{h} 
  +o(\e)- \lan ] \, v_{\e} \bphi^{\e}
 \:dz \, dt   = 0 .
\end{align*} 
Passing to the two-scale limit we get the variational formulation 
of 
$$
- (\div_y +2i\pi\theta^n) \Big( A(y) (\nabla_y +2i\pi\theta^n)v^* \Big) + 
c(y)v^* = \lan v^* \quad \mbox{ in } \T^N .
$$
The simplicity of $\lan$ implies that there exists a scalar function $v(t,z)\in 
L^2\left(\R^+\times\R^N\right)$ such that 
\begin{equation}\label{vstar}
v^*(t,z,y) = v(t,z) \psi_n(y) . 
\end{equation}

\no
{\bf Second step.} 
We multiply (\ref{neweq}) by the complex conjugate of 
\begin{equation*}
\Psi_{\e}(t,z)=e^{2i\pi\theta^{n}\cdot\frac{z}{\sqrt{\e}}}\Big[
\psie \phi(t,z)  +
\sqrt{\e}
\sum_{k=1}^{N}
\Big(
\frac{1}{2i\pi}
\frac{\partial \psie}{\partial \theta_{k}}
\frac{\partial \phi}{\partial z_{k}}(t,z)  +
z_{k}
\frac{\partial \psie}{\partial x_{k}}
\phi(t,z)
\Big)
\Big]\,,
\end{equation*}
where $\phi(t,z)$ is a smooth test function with compact support 
in $\R^+\times\R^N$.
\no
We first look at those terms of the equation involving time derivatives:
\begin{align}\label{timeder}
     & \int_{0}^{+\infty}\hspace{-3mm}\int_{\R^{N}}
       i\frac{\partial w_\e}{\partial t}\bar{\Psi}_\e \, dt\,dz =\\
\non &\int_{0}^{+\infty}\hspace{-3mm}\int_{\R^{N}}-i v_{\e} 
       \left[ 
       \bpsie \frac{\partial \bphi}{\partial t} +\sqrt{\e}\sum_{k=1}^{N}
       \left(
      -\frac{1}{2i\pi}\frac{\partial \bpsie}{\partial \theta_{k}}
       \frac{\partial^{2} \bphi}{\partial t \partial z_{k}} + z_{k}
       \frac{\partial \bpsie}{\partial x_{k}}\frac{\partial \bphi}{\partial t}
       \right)
       \right]\, 
       dt\, dz  \\
\non  & -i \int_{\R^{N}}
   v_{\e}(0,z)
   \left[
   \bpsie \bphi (0,z) +
   \sqrt{\e} \sum_{k=1}^{N}
   \left(
   -\frac{1}{2i\pi}
   \frac{\partial \bpsie}{\partial \theta_{k}}
   \frac{\partial \bphi}{\partial z_{k}}(0,z) + 
   z_{k} \frac{\partial \bpsie}{\partial x_{k}}\bphi (0,z)
   \right)
   \right] \, dz\,.  
\end{align}
\no
Recalling the normalization $\int_{\T^{N}}|\psin|^{2}\,dy=1$,  
we find that the two-scale limit of the term on the left hand side of 
\eqref{timeder} is given by the expression
\begin{equation}\label{term1}
-i  \int_{0}^{+\infty}\hspace{-3mm}\int_{\R^{N}} v \frac{\partial \bphi}{\partial t}\: dz\, dt 
-i \int_{\R^{N}} v^{0}\bphi (0,z) \, dz  \,.
\end{equation}

\no
We further decompose $\Psi_\e$ as follows 
$$
\Psi_\e=\Psi_\e^1+\Psi_\e^2\cdot z \quad\text{ with }\quad 
\Psi_\e^2 = \sqrt{\e}
e^{2i\pi\theta^{n}\cdot\frac{z}{\sqrt{\e}}}
\sum_{k=1}^{N}\frac{\partial \psie}{\partial x_{k}}\phi(t,z)e_k.
$$
\no
Getting rid of all terms multiplied by $o(\e)$ and taking into account \eqref{wdef} 
and \eqref{formula} we next pass to the limit in the remaining terms 
of \eqref{neweq} multiplied by $\bar{\Psi}_{\e}$. 
The computation is similar to \cite{alpiat2} but it involves new terms since 
$\psin$ and its derivatives also depend on $x$. 
We first look at those terms which are of zero order with respect to $z$, namely
\begin{align}\label{zero}
     & \int_{0}^{+\infty}\hspace{-3mm}\int_{\R^{N}}\Big[
      \A \nabla w_\e \cdot (\nabla\bar{\Psi}_\e^1+ \bar{\Psi}_\e^2) +
      \frac{1}{\e}(\ce-\lan)w_\e \bar{\Psi}_\e^1 \Big] \: dz\, dt \\
\non  & = \int_{0}^{+\infty}\hspace{-3mm}\int_{\R^{N}}\Big[
         \frac{1}{\e} \A\Big(\sqrt{\e}\nabla + 2i\pi\theta^{n}\Big)v_{\e}
         \cdot (\nabla_{y}-2i\pi\theta^{n})\bpsie \bphi +
          \frac{1}{\e}(\ce-\lan)\bpsie v_{\e} \bphi  
          \Big] \: dz\, dt\\
\non  & -\frac{1}{2i\pi}
          \int_{0}^{+\infty}\hspace{-3mm}\int_{\R^{N}}\Big[
           \frac{1}{\sqrt{\e}}\A \Big(\sqrt{\e}\nabla + 2i\pi\theta^{n}\Big)v_{\e} \cdot  
           (\nabla_{y}-2i\pi\theta^{n})\frac{\partial \bpsie}{\partial \theta_{k}}
           \frac{\partial \bphi}{\partial z_{k}} \\
\non  & \hspace{2.7cm} + \frac{1}{\sqrt{\e}} (\ce-\lan) v_{\e}
            \frac{\partial \bpsie}{\partial \theta_{k}}\frac{\partial \bphi}{\partial z_{k}}
           \Big] \:dz\, dt \\
\non  &   +\int_{0}^{+\infty}\hspace{-3mm}\int_{\R^{N}} 
           \frac{1}{\sqrt{\e}}\A \Big(\sqrt{\e}\nabla + 2i\pi\theta^{n}\Big)v_{\e} \cdot 
           \bpsie \nabla \bphi  \: dz\, dt \\
\non   &  + \int_{0}^{+\infty}\hspace{-3mm}\int_{\R^{N}} -\frac{1}{2\pi i}
            \A \Big(\sqrt{\e}\nabla + 2i\pi\theta^{n}\Big)v_{\e} \cdot 
            \frac{\partial \bpsie}{\partial \theta_{k}}
            \nabla\frac{\partial \bphi}{\partial z_{k}} \:dz\, dt\,\\
\non   &  + \int_{0}^{+\infty}\hspace{-3mm}\int_{\R^{N}} 
           \A \Big(\sqrt{\e}\nabla + 2i\pi\theta^{n}\Big)v_{\e} 
            \cdot \frac{\partial \bpsie}{\partial x_{k}}\bphi \, e_{k}
            \:dz\, dt\,.
\end{align}

\vspace{2mm}

\no
Using equation \eqref{eq1} with $\var=v_{\e}\bphi$ and equation \eqref{eq2} with 
${\displaystyle \var=v_{\e}\frac{\partial \bphi}{\partial z_{k}}}$ we rewrite the first 
two integrals in the right hand side of \eqref{zero} as follows 

\begin{align*}
&   \int_{0}^{+\infty}\hspace{-3mm}\int_{\R^{N}}
    -\frac{1}{\sqrt{\e}} \A(\nabla_{y}-2i\pi\theta^{n})\bpsie\cdot v_{\e}\nabla \bphi 
    \,dz\, dt\,  \\
&   +\int_{0}^{+\infty}\hspace{-3mm}\int_{\R^{N}} \Big[
     \frac{1}{2i\pi} \A(\nabla_{y}-2i\pi\theta^{n})\frac{\partial \bpsie}{\partial \theta_{k}}
     \cdot  v_{\e}\nabla \frac{\partial \bphi}{\partial z_{k}}
     + \frac{1}{\sqrt{\e}}\A e_{k}\cdot 
     v_{\e} \frac{\partial \bphi}{\partial z_{k}}(\nabla_{y}-2i\pi\theta^{n}) \bpsie \\ 
&   \hspace{2cm} - \frac{1}{\sqrt{\e}}\A \bpsie e_{k}\cdot 
     \Big(\sqrt{\e}\nabla +2i\pi\theta^{n}\Big)
     \Big(v_{\e}\frac{\partial \bphi}{\partial z_{k}}\Big)\Big]  \,dz\, dt\,.            
\end{align*}

\vspace{3mm}

\no
Combining the above terms with the other terms in \eqref{zero} and passing to the 
two-scale limit in \eqref{zero} yields

\begin{align}\label{term2}
     & \int_{0}^{+\infty}\hspace{-3mm}\int_{\R^{N}}\int_{\T^{N}}
          \Big[
          \frac{1}{2i\pi}A\psin (\nabla_{y}-2i\pi\theta^{n})\frac{\partial\bpsin}{\partial\theta_{k}}
         -\frac{1}{2i\pi}A \frac{\partial\bpsin}{\partial\theta_{k}}(\nabla_{y}+2i\pi\theta^{n})\psin
         - A |\psin|^{2} e_{k}\Big] \\
\non &  \hspace{2.4cm}\cdot v \nabla \frac{\partial \bphi}{\partial z_{k}}
          \:dy \,dz\, dt \\
\non &  +\int_{0}^{+\infty}\hspace{-3mm}\int_{\R^{N}}\int_{\T^{N}}
         A(\nabla_{y}+2i\pi\theta^{n})\psin\cdot \frac{\partial \bpsin}{\partial x_{k}}v\bphi \, e_{k}
          \:dy \,dz\, dt\,.
\end{align}
By equation \eqref{eq6} it can be seen that the first integral of \eqref{term2} equals
\begin{equation}\label{tensorA}
\int_{0}^{+\infty}\hspace{-3mm}\int_{\R^{N}} A^{*}\nabla v \nabla \bphi \,dz\, dt\,.
\end{equation}

\no
We now focus on those terms which are linear in $z$:
\begin{align}\label{linear}
\non & \int_{0}^{+\infty}\hspace{-3mm}\int_{\R^{N}}\Big[
       \A\nabla w_\e\cdot(\nabla\bar{\Psi}_\e^2 z)+\frac{1}{\e}(\ce-\lan)w_\e\bar{\Psi}_\e^2 z  
       +\A_{1,k}\sqrt{\e}z_k\nabla w_\e\cdot (\nabla\bar{\Psi}_\e ^1+\bar{\Psi}_\e^2)\\
\non & \hspace{1.5cm} +\frac{1}{\sqrt{\e}}\ce_{1,k}z_k w_{\e}\bar{\Psi}_e^1
        \Big] \: dz \, dt  \\
\non & = \int_{0}^{+\infty}\hspace{-3mm}\int_{\R^{N}}\Big[
         \frac{1}{\sqrt{\e}} \A \Big(\sqrt{\e}\nabla +2i\pi\theta^{n}\Big)v_{\e} \cdot 
          (\nabla_{y}-2i\pi\theta^{n})\frac{\partial \bpsie}{\partial x_{k}}\,\bphi z_{k}
          + \frac{1}{\sqrt{\e}}(\ce-\lan)v_{\e}\frac{\partial \bpsie}{\partial x_{k}}\,\bphi z_{k}
          \Big] dz \, dt  \\
\non   & + \int_{0}^{+\infty}\hspace{-3mm}\int_{\R^{N}}\Big[
           \frac{1}{\sqrt{\e}}\A_{1,k} \Big(\sqrt{\e}\nabla +2i\pi\theta^{n}\Big)v_{\e}\hspace{-1mm}
          \cdot \hspace{-1mm}
         (\nabla_{y}-2i\pi\theta^{n}) \bpsie \,\bphi z_{k}
          + \frac{1}{\sqrt{\e}} \ce_{1,k} v_{\e}\bpsie \, \bphi \, z_{k}   
          \Big]\: dz \, dt  \\
\non   & + 
         \int_{0}^{+\infty}\hspace{-3mm}\int_{\R^{N}}\Big[ 
          \A(\sqrt{\e}\nabla +2i\pi\theta^{n})v_{\e}\cdot 
          \frac{\partial \bpsie}{\partial x_{k}}\nabla\bphi\, z_{k} 
           + \A_{1,k} (\sqrt{\e}\nabla +2i\pi\theta^{n})v_{\e}
           \cdot  \bpsie \nabla\bphi \, z_{k}\Big]  \: dz \, dt \\
\non   & -\frac{1}{2i\pi}\int_{0}^{+\infty}\hspace{-3mm}\int_{\R^{N}}\Big[ 
           \A_{1,h} (\sqrt{\e}\nabla +2i\pi\theta^{n})v_{\e}\cdot 
           (\nabla_{y}-2i\pi\theta^{n})
           \frac{\partial \bpsin}{\partial \theta_{k}}\frac{\partial \bphi}{\partial z_{k}}\, z_{h} 
           + \ce_{1,h} v_{\e}
           \frac{\partial \bpsin}{\partial \theta_{k}}\frac{\partial \bphi}{\partial z_{k}}\, z_{h} 
           \Big]\: dz \, dt \\          
\non &  +\int_{0}^{+\infty}\hspace{-3mm}\int_{\R^{N}}\Big[ 
            \sqrt{\e}\A_{1,h}(\sqrt{\e}\nabla +2i\pi\theta^{n})v_{\e}\cdot 
            \Big(-\frac{1}{2i\pi}
            \frac{\partial \bpsie}{\partial \theta_{k}}\nabla\frac{\partial \bphi}{\partial z_{k}}
            +\frac{\partial \bpsie}{\partial x_{k}}\bphi \, e_{k}
            \Big) z_{h}  
            \Big]\: dz \, dt \,. \\     
\end{align}
By equation \eqref{eq3} with ${\displaystyle \var = v_{\e}\bphi z_{k}}$  
it can be seen that the sum of the first two integrals in the right hand side of \eqref{linear} gives
\begin{equation}\label{byeq3}
- \int_{0}^{+\infty}\int_{\R^{N}} 
\A(\nabla_{y}-2i\pi\theta^{n})\frac{\partial \bpsie}{\partial x_{k}} \cdot
v_{\e} \nabla (\bphi z_{k}) 
+ \A_{1,k} (\nabla_{y}-2i\pi\theta^{n}) \bpsie \cdot  v_{\e} \nabla (\bphi z_{k})
\Big) \: dz \, dt \,.
\end{equation}          

\no
Therefore passing to the two-scale limit in \eqref{linear} we find

\begin{align} \label{linear1}
       & -\int_{0}^{+\infty}\hspace{-3mm}\int_{\R^{N}}\int_{\T^{N}}\Big[
           A (\nabla_{y}-2i\pi\theta^{n})\frac{\partial \bpsin}{\partial x_{k}}
           \cdot v \psin \bphi\, e_{k}
           + A_{1,k} (\nabla_{y}-2i\pi\theta^{n}) \bpsin\cdot 
           v \psin \bphi \, e_{k}\Big]  \: dy \, dz \, dt\\
\non    & - \int_{0}^{+\infty}\hspace{-3mm}\int_{\R^{N}}\int_{\T^{N}}\Big[
           A (\nabla_{y}-2i\pi\theta^{n})\frac{\partial \bpsin}{\partial x_{k}}
           \cdot v \psin z_{k} \nabla\bphi            
           + A_{1,k}(\nabla_{y}-2i\pi\theta^{n})\bpsin \cdot 
           v \psin z_{k} \nabla\bphi\Big] \: dy \, dz \, dt\\
\non   & + \int_{0}^{+\infty}\hspace{-3mm}\int_{\R^{N}}\int_{\T^{N}}\Big[
           A(\nabla_{y}+2i\pi\theta^{n})\psin \cdot v
           \frac{\partial \bpsin}{\partial x_{k}} z_{k}\nabla\bphi 
           + A_{1,k}(\nabla_{y}+2i\pi\theta^{n})\psin \cdot 
           v \bpsin  z_{k}\nabla\bphi \Big]\: dy \, dz \, dt\\                       
\non   & -\frac{1}{2i\pi}
          \int_{0}^{+\infty}\hspace{-3mm}\int_{\R^{N}}\int_{\T^{N}}\Big[
           A_{1,h}(\nabla_{y}+2i\pi\theta^{n})\psin \cdot 
           (\nabla_{y}-2i\pi\theta^{n})\frac{\partial \bpsin}{\partial \theta_{k}}
           v z_{h}\frac{\partial \bphi}{\partial z_{k}}\\
\non  &  \hspace{3cm} + c_{1,h} \psin 
           \frac{\partial \bpsin}{\partial \theta_{k}}v z_{h}\frac{\partial \bphi}{\partial z_{k}}
           \Big]\: dy \, dz \, dt \,.
\end{align}          
\no
By equation \eqref{eq4} it follows that the last integral in \eqref{linear1} is equal to
\begin{align}\label{linear2}
       &   \int_{0}^{+\infty}\hspace{-3mm}\int_{\R^{N}}\int_{\T^{N}}
            \Big[
            A_{1,h}\psin e_{k} \cdot(\nabla_{y}-2i\pi\theta^{n})\bpsin + A\psin e_{k} 
             \cdot(\nabla_{y}-2i\pi\theta^{n})\frac{\partial \bpsin}{\partial x_{h}}\psin \Big]
             v z_{h} \frac{\partial \bphi}{\partial z_{k}}
             \:\, dy \, dz \, dt \\                               
\non   & - \int_{0}^{+\infty}\hspace{-3mm}\int_{\R^{N}}\int_{\T^{N}}
           \Big[
           A_{1,h}\bpsin e_{k}\cdot (\nabla_{y}+2i\pi\theta^{n})\psin  
           + A\frac{\partial \bpsin}{\partial x_{h}}e_{k}\cdot (\nabla_{y}+2i\pi\theta^{n})\psin\Big] 
           v z_{h} \frac{\partial \bphi}{\partial z_{k}}
           \:\, dy \, dz \, dt \\
\non   & - \int_{0}^{+\infty}\hspace{-3mm}\int_{\R^{N}}\int_{\T^{N}}
           \frac{1}{2i\pi}\frac{\partial^{2} \lan}{\partial x_{h}\partial\theta_{k}}
           |\psin|^{2} v z_{h} \frac{\partial \bphi}{\partial z_{k}}
           \:\, dy \, dz \, dt \,.
\end{align}

\no
Next notice that the first and the second line of \eqref{linear2} cancel out with the second 
and the third line of \eqref{linear1} respectively and therefore \eqref{linear1} reduces to
\begin{align}\label{term3}
       &  -\int_{0}^{+\infty}\hspace{-3mm}\int_{\R^{N}}\int_{\T^{N}}\Big[  
           A (\nabla_{y}-2i\pi\theta^{n})\frac{\partial \bpsin}{\partial x_{k}}
           \cdot v\psin\bphi \,e_{k}
           + A_{1,k}(\nabla_{y}-2i\pi\theta^{n})\bpsin\cdot v \psin 
           \bphi \, e_{k}\Big]  \:\, dy \, dz \, dt \\
\non &   -\int_{0}^{+\infty}\hspace{-3mm}\int_{\R^{N}}
            \frac{1}{2i\pi}\frac{\partial^{2} \lan}{\partial x_{h}\partial\theta_{k}}
             v \frac{\partial \bphi}{\partial z_{k}} \, z_{h}
            \: dz \, dt \,.
\end{align}

\no
Finally we consider all quadratic in $z$ terms:
\begin{align}
\non    &   \frac{1}{2}\int_{0}^{+\infty}\hspace{-3mm}\int_{\R^{N}}\Big[
             \A_{2,lh} \,\e z_{l}z_{h}\nabla w_\e\cdot(\nabla\bar{\Psi}_\e^1+\bar{\Psi}_\e^2)
            + \ce_{2,lh} z_{l}  z_{h}  w_{\e}\bar{\Psi_e^1}\Big] \: dz \, dt  \\ 
\non    &   +\int_{0}^{+\infty}\hspace{-3mm}\int_{\R^{N}}\Big[
            \A_{1,k}\sqrt{\e}z_k\nabla w_\e\cdot (z\nabla\bar{\Psi}_\e ^2)
            +\frac{1}{\sqrt{\e}}\ce_{1,k}z_k w_{\e}z\cdot\bar{\Psi}_\e^2 \Big]\: dz \, dt  \\
\non    &   = \frac{1}{2} \int_{0}^{+\infty}\hspace{-3mm}\int_{\R^{N}}
              \A_{2,lh} \, \sqrt{\e} z_{l}z_{h}
              \Big(\sqrt{\e}\nabla+2i\pi\theta^{n}\Big)v_{\e}\cdot
              \Big[
              \frac{1}{\sqrt{\e}}(\nabla_{y}-2i\pi\theta^{n})\bpsie \bphi  
              +\bpsie \nabla\bphi \Big] \: dz \, dt \\
\non    &  - \frac{1}{2}\int_{0}^{+\infty}\hspace{-3mm}\int_{\R^{N}}  
              \A_{2,lh} \, \sqrt{\e} z_{l}z_{h}
              \Big(\sqrt{\e}\nabla+2i\pi\theta^{n}\Big)v_{\e} \\
\non    &  \hspace{2.4cm}\cdot
              \Big[
              \frac{1}{2\pi i}  
              \nabla_{y}\frac{\partial \bpsie}{\partial \theta_{k}} 
              \frac{\partial \bphi}{\partial z_{k}}
             +\sqrt{\e}
              \Big(
               \frac{1}{2i\pi}
              \frac{\partial \bpsie}{\partial \theta_{k}} \nabla\frac{\partial \bphi}{\partial z_{k}}
              + e_{k} \frac{\partial \bpsie}{\partial x_{k}} \bphi
              \Big) \Big] \, dz \, dt \\
\non    &  + \int_{0}^{+\infty}\hspace{-3mm}\int_{\R^{N}}
              \A_{1,h}\,z_{h}\Big(\sqrt{\e}\nabla+2i\pi\theta^{n}\Big)v_{\e}
              \cdot
              \Big[
              z_{k}(\nabla_{y}-2i\pi\theta^{n})\frac{\partial \bpsie}{\partial x_{k}} \bphi
              +\sqrt{\e} z_{k} \frac{\partial \bpsie}{\partial x_{k}} \nabla \bphi
              \Big]  \: \, dz \, dt \\
\non    &  + \int_{0}^{+\infty}\hspace{-3mm}\int_{\R^{N}}
              \frac{1}{2} \ce_{2,lh} z_{l}  z_{h}  v_{\e}
              \Big(
              \bpsie\bphi  
              - \sqrt{\e} \frac{1}{2i\pi} 
              \frac{\partial \bpsie}{\partial \theta_{k}} \frac{\partial \bphi}{\partial z_{k}} 
              \Big)  \: \, dz \, dt \\
\non    &  + \int_{0}^{+\infty}\hspace{-3mm}\int_{\R^{N}}
             \ce_{1,h} z_{h} v_{\e}
              z_{k} \frac{\partial \bpsie}{\partial x_{k}} \bphi
              \: dz \, dt   
\end{align}

\no
which give on passing to the two-scale limit
\begin{align} \label{quad}
          &  \frac{1}{2}\int_{0}^{+\infty}\hspace{-3mm}\int_{\R^{N}} \int_{\T^{N}}
              \Big[
              A_{2,lh}(\nabla_{y}+2i\pi\theta^{n})\psin\cdot (\nabla_{y}-2i\pi\theta^{n})\bpsin  
              + c_{2,lh} \psin \bpsin \Big] v \bphi \,  z_{l}  z_{h} \: dy \, dz \, dt  \\
\non    &  + \int_{0}^{+\infty}\hspace{-3mm}\int_{\R^{N}} \int_{\T^{N}}
              \Big[
              A_{1,h}(\nabla_{y}+2i\pi\theta^{n})\psin  \cdot   
              (\nabla_{y}-2i\pi\theta^{n})\frac{\partial \bpsin}{\partial x_{k}}             
              + c_{1,h} \psin \frac{\partial \bpsin}{\partial x_{k}} 
              \, \Big] v \bphi \,  z_{h}  z_{k}  \: dy \, dz \, dt  \,.
\end{align}
\no
Now using equation \eqref{eq5} we find that \eqref{quad} reduces itself to 

\begin{equation}\label{term4}
\int_{0}^{+\infty}\hspace{-3mm}\int_{\R^{N}} 
\frac{1}{2}\frac{\partial^{2} \lan}{\partial x_{l}\partial x_{h}} \:
v \bphi \,  z_{l}  z_{h} \: dz \, dt \,.
\end{equation}
\no
Summing up together \eqref{term1}, \eqref{term2}, \eqref{tensorA}, 
\eqref{term3} and \eqref{term4} 
yields the weak formulation of \eqref{hom}. 
By uniqueness of the solution of the homogenized problem \eqref{hom}, 
we deduce that the entire sequence $v_\e$ two-scale converges 
weakly to $\psin(y)v(t,x)$. 

It remains to prove the strong two-scale convergence of $v_\e$. 
By Lemma \ref{apriori} we have 
$$
|| v_\e(t) ||_{L^2(\R^N)} = || u_\e(t) ||_{L^2(\R^N)} = 
|| u^0_\e ||_{L^2(\R^N)} \to || \psi_n v^0 ||_{L^2(\R^N\times\T^N)}
= || v^0 ||_{L^2(\R^N)}
$$
by the normalization condition of $\psi_n$. From the conservation of 
energy of the homogenized equation \eqref{hom} we have 
$$
|| v(t) ||_{L^2(\R^N)} = || v^0 ||_{L^2(\R^N)} ,
$$
and thus we deduce the strong convergence from Proposition \ref{prop2s}. 
$\Box$

\bigskip

\begin{remark}\label{lastrem}
{\rm As usual in periodic homogenization \cite{allaire}, \cite{blp}, 
the choice of the test function $\Psi_\e$, in the proof 
of Theorem \ref{mainth}, is dictated by the formal two-scale 
asymptotic expansion that can be obtained for the 
solution $w_\e$ of \eqref{neweq}, namely
$$
w_\e(t,z) \approx 
e^{2i\pi\theta^{n}\cdot\frac{z}{\sqrt{\e}}}\Big[
\psin\Big(\frac{z}{\sqrt{\e}}\Big) v(t,z)  +
\sqrt{\e}
\sum_{k=1}^{N}
\Big(
\frac{1}{2i\pi}
\frac{\partial \psin}{\partial \theta_{k}}\Big(\frac{z}{\sqrt{\e}}\Big)   
\frac{\partial v}{\partial z_{k}}(t,z)  +
z_{k}
\frac{\partial \psin}{\partial x_{k}}\Big(\frac{z}{\sqrt{\e}}\Big)v(t,z)
\Big)
\Big]
$$
where $v$ is the homogenized solution of \eqref{hom}. 
Actually the homogenized equation that one gets by the asymptotic
expansion method is
\begin{equation}\label{asym}
i \frac{\partial v}{\partial t} - \div\left(A^{*}\nabla v \right) 
+ B^{*} \nabla v \cdot z + \bar{c}^{*} v 
+ v D^{*} z \cdot z  = 0 \, ,
\end{equation}
which apparently differs from \eqref{hom} by the following
zero-order term
$$
\left( \tr(\nabla_{\theta}\nabla_{x}\lan) - 4\pi \text{Im}( c^{*}) \right) v \,.
$$
By virtue of \eqref{contaccio} the above term vanishes,
so that formulae \eqref{asym} and \eqref{hom} are equivalent.
}
\end{remark}

\medskip
\centerline{\sc Acknowledgments}
This work was done while M. Palombaro was post-doc at the Centre de
Math\'ematiques Appliqu\'ees of Ecole Polytechnique. The hospitality
of people there is gratefully acknowledged. This work was partly
supported by the MULTIMAT european network MRTN-CT-2004-505226 funded by the EEC.

\end{document}